\documentclass[reqno,twoside]{amsart}
\usepackage{comment}
\usepackage{etex}
\usepackage{amscd}
\usepackage{amsfonts}
\usepackage{amsmath}
\usepackage{amssymb}
\usepackage{amsthm}
\usepackage{amsaddr}
\usepackage{fancyhdr}
\usepackage{latexsym}
\usepackage[colorlinks=true, pdfstartview=FitV, linkcolor=blue, citecolor=blue, urlcolor=blue]{hyperref}
\usepackage{enumitem}      
\usepackage{mathtools}            
\usepackage{indentfirst} 
\usepackage{color}
\usepackage{caption}          
\usepackage[normalem]{ulem}
\usepackage{upgreek}
\usepackage{dutchcal}
\usepackage{mathrsfs}
\renewcommand{\b}{\textcolor{blue}}

\usepackage{subcaption}
\newcommand{\R}{\mathbb{R}}

\newcommand{\eq}[1]{\begin{align}#1\end{align}}
\newcommand{\eqs}[1]{\begin{align*}#1\end{align*}}

	\newcommand{\p}[1]{\left( #1 \right)}

	\newcommand{\norm}[1]{\left\lVert #1 \right\rVert}

	\newcommand{\alignlabel}[1]{\stepcounter{equation} \tag{\theequation} \label{#1}}

	\let\EPSILON\epsilon
	\let\VAREPSILON\varepsilon

	\renewcommand{\epsilon}{\VAREPSILON}
	\renewcommand{\varepsilon}{\EPSILON}
	
	\renewcommand{\subset}{\subseteq}

	\renewcommand{\b}[1]{\left[ #1 \right]}

	\renewcommand{\hat}{\widehat}
	\renewcommand{\tilde}{\widetilde}

	\definecolor{red}{RGB}{220,0,0}
	\definecolor{green}{RGB}{0,180,0}
	\usepackage{thmtools}
	\declaretheoremstyle[headfont=\kpfonts]{normalhead}
	
	\newtheorem{theorem}{Theorem}[section]
	
	\newtheorem{lemma}{Lemma}[section]

	\theoremstyle{definition}
	
	\newtheorem{remark}{Remark}[section]
	
	\allowdisplaybreaks
	\numberwithin{equation}{section}
	\numberwithin{figure}{section}
	\usepackage{geometry}
	\usepackage{tikz}
	\usetikzlibrary{arrows.meta}
	\usetikzlibrary{decorations.markings}
	\tikzset{->-/.style={decoration={
				markings,
				mark=at position #1 with {\arrow{>}}},postaction={decorate}}}
	\tikzset{middlearrow/.style={
			decoration={markings,
				mark= at position 0.55 with {\arrow{#1}} ,
			},
			postaction={decorate}
		}
	}
	\let\OLDthebibliography\thebibliography
	\renewcommand\thebibliography[1]{
		\OLDthebibliography{#1}
		\setlength{\parskip}{0pt}
		\setlength{\itemsep}{0pt plus 0.3ex}
	}
	\AtBeginDocument{
		\def\MR#1{}
	}

	\makeatletter
	\renewcommand\subsection{\@startsection{subsection}{2}%
		\z@{-0.8\linespacing\@plus-0.7\linespacing}{0.7\linespacing}%
		{\bfseries}}
	\makeatother

	\makeatletter
	\@namedef{subjclassname@2020}{%
		\textup{2020} Mathematics Subject Classification}
	\makeatother
	
	\usetikzlibrary{decorations.markings}
	
\begin{document}
		
		\title{Local well-posedness of 
        the higher-order nonlinear Schr\"{o}dinger equation on the half-line: the case of two boundary conditions}
		
		\author{Aykut Alkın$^\dagger$, Dionyssios Mantzavinos$^\ddagger$,  Türker Özsarı$^{*}$}
		
		\address{\normalfont $^{\dagger}$Department of Mathematics, Izmir Institute of Technology\\
			\normalfont $^{\ddagger}$Department of Mathematics, University of Kansas
			\\
			\normalfont $^{*}$Department of Mathematics, Bilkent University
		} \email{\!turker.ozsari@bilkent.edu.tr \textnormal{(corresponding author)}}

		\thanks{\textit{Acknowledgements.} DM gratefully acknowledges support from the U.S. National Science
Foundation (NSF-DMS 2206270 and NSF-DMS 2509146) and the Simons Foundation (SFI-MPS-TSM-00013970). 
		}
		\subjclass[2020]{35Q55, 35G31, 35G16}
		\keywords{higher-order nonlinear Schr\"odinger equation, unified transform of Fokas, initial-boundary value problems, general power nonlinearity, fractional Sobolev spaces, Strichartz estimates}
		%
		
		%
		\begin{abstract}
	We prove the local Hadamard well-posedness of the higher-order nonlinear Schrödinger equation with negative dispersion coefficient on the half-line under nonzero Dirichlet and Neumann boundary data. The paper uncovers a striking feature of this model: unlike the positive-dispersion case studied in \cite{amo2024}, well-posedness requires two boundary conditions rather than one. This reflects a structural limitation in the underlying global relation, which allows elimination of at most one unknown boundary trace. Using the unified transform of Fokas, we derive an explicit solution formula for the associated forced linear problem and establish sharp Sobolev and Strichartz estimates on the half-line. These linear estimates, together with corresponding Cauchy problem bounds, yield local well-posedness in both high- and low-regularity Sobolev spaces, as well as local Lipschitz continuity of the solution map.
		\end{abstract}
		
		\vspace*{-0.5cm}
		
		\maketitle
		\markboth
		{A. Alkın, D. Mantzavinos \& T. Özsarı}
		{Local well-posedness of the higher-order nonlinear Schr\"odinger equation on the half-line}
		

		\section{Introduction}
		
		We consider the higher-order nonlinear Schr\"odinger (HNLS) equation with a power nonlinearity on the half-line $(0, \infty)$ with nonzero Dirichlet-Neumann boundary conditions:
		\begin{equation}
			\begin{aligned}\label{nonlinear}
				&i u_t + i \beta u_{xxx} + \alpha u_{xx} + i \delta u_x = \kappa |u|^{\lambda - 1} u, \quad 0 < x < \infty, \ 0 < t < T,
				\\
				&u(x, 0) = u_0 (x) \in H^s(0, \infty),\\
				&u(0, t) = g_0(t) \in H^{\frac{s+1}{3}}(0, T), \quad u_x (0, t) =g_1(t) \in H^{\frac{s}{3}}(0, T),
			\end{aligned}
		\end{equation}
		where $\beta < 0$, $\alpha, \delta \in \mathbb R$, $\kappa \in \mathbb C$, $\lambda>1$, and $T>0$ is an appropriate lifespan to be determined. 
		In the above model, $H^s(0, \infty)$ is defined as the restriction on the half-line interval $(0, \infty)$ of the usual Sobolev space on the whole line 
		\eq{
			H^s(\R) := \left\{\phi \in L^2(\R): \p{1+k^2}^{\frac s2} \mathcal F\{\phi\} \in L^2(\R)\right\}, \quad s\geq 0,
		}
		where $\mathcal F\{\cdot\}$ denotes the Fourier transform. An equivalent characterization of $H^s(0, \infty)$ is the Sobolev space $W^{s, 2}(0, \infty)$ of functions in $L^2(0, \infty)$ whose first $s$ derivatives belong in $L^2(0, \infty)$ when $s$ is a non-negative integer, and the definition extends to any $s \geq 0$ via interpolation or explicitly through the Sobolev-Slobodeckij norm.
		The Sobolev spaces $H^{\frac{s+1}{3}}(0, T)$ and $H^{\frac{s}{3}}(0, T)$ for the two pieces of boundary data are defined as the restrictions on the finite interval $(0, T)$ of the Sobolev spaces on the whole line with the corresponding indices. It should be noted that the two Sobolev exponents $\frac{s+1}{3}$ and $\frac{s}{3}$ for the boundary data depend on the Sobolev exponent $s$ for the initial data.

The higher-order nonlinear Schr\"odinger (HNLS) equation may be viewed as a refinement of the classical nonlinear Schr\"odinger model in regimes where higher-order dispersive effects can no longer be neglected. In particular, the third-order term plays an essential role in the description of ultrashort pulse propagation in nonlinear optical fibers; see, for example, \cite{koda85, koda87}. In its physically derived form, the model may also involve derivative nonlinear terms, but in the present work we focus on a pure power nonlinearity of any superlinear order, thereby treating a direct higher-dispersion analogue of NLS while retaining a sufficiently flexible nonlinear framework. From a mathematical perspective, this places the equation at an interface between Schr\"odinger-type and Korteweg-de Vries-type dispersive behavior.

The Cauchy problem for HNLS on the whole line has been studied in several works, including \cite{Laurey97,Tak00,Carvajal04,Carvajal06,Fam22}. In contrast, the corresponding nonhomogeneous initial-boundary value problems are much less developed. In our previous work \cite{amo2024}, we established local Hadamard well-posedness on the half-line in the case $\beta>0$, where only one boundary condition is required. In the same work, it was remarked that the case $\beta<0$ would require two boundary conditions at $x=0$, which was subsequently also verified in the recent papers~\cite{hy2025,hy2026} in a particular context restricted to a monomial linear part. Moreover, in \cite{MMO26} and \cite{OK22}, the finite-interval problem with mixed Dirichlet-Neumann boundary data was treated. The present paper addresses the genuinely different half-line configuration $\beta<0$, in which both Dirichlet and Neumann data must be prescribed at the boundary point $x=0$. 

A noteworthy feature of the problem studied here is that the need for two boundary conditions is not merely a formal consequence of the negative sign of the highest-order term itself (e.g. in the case of the heat equation only one boundary condition is needed) but rather it is closely tied to the overall spectral structure of the associated linear problem. Indeed, after deriving a certain spectral identity known as the global relation for the forced linear equation, one finds that the relevant spectral symmetry allows the elimination of at most one unknown boundary trace. As a result, a single prescribed boundary value is not sufficient for closing the representation formula when $\beta<0$. This sharply contrasts with the positive-dispersion half-line problem considered in \cite{amo2024}.

Our main results stated below show that the negative-dispersion HNLS half-line problem admits a complete Hadamard well-posedness theory despite the genuinely new structural difficulty created by the need to prescribe two boundary conditions at $x=0$.  

\begin{theorem}[High regularity well-posedness]\label{HighRegThem}
    Let $\frac12<s\le 2$, $s\neq \frac32$, $\lambda>1$, where if $\lambda\not\in 2\mathbb{N}+1$, then assume 
    \[
\text{if } s \in \mathbb{N}, \text{ then } \lambda \ge s+1 \text{ if } \lambda \in 2\mathbb{N}; \ \lfloor \lambda \rfloor \ge s \text{ if } \lambda \notin \mathbb{N},
\]
\[
\text{if } s \notin \mathbb{N}, \text{ then } \lambda > s+1 \text{ if } \lambda \in 2\mathbb{N}; \ \lfloor \lambda \rfloor \ge \lfloor s \rfloor +1 \text{ if } \lambda \notin \mathbb{N}.
\] Let also $u_0\in H^s_x(\mathbb{R_+})$, $g_0\in H^{\frac{s+1}{3}}_{\text{loc},x}(\mathbb{R_+})$, $g_1\in H^{\frac{s}{3}}_{\text{loc},x}(\mathbb{R_+})$ and assume that the compatibility conditions
$$u_0(0)=g_0(0), \, s>\frac12; \quad u_0'(0)=g_1(0), \, s>\frac32$$ hold.  Then, there exists $T>0$ such that \eqref{nonlinear} admits a unique solution in $u\in C([0,T];H^s_x(\mathbb{R}_+))$. Moreover, the data-to-solution map $(u_0,g_0,g_1)\mapsto u$ is locally Lipschitz continuous. 

\end{theorem}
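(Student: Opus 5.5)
The plan is the standard contraction-mapping scheme built on the unified transform (Fokas) representation of the forced linear problem. First, treat the linear problem
\[
iu_t+i\beta u_{xxx}+\alpha u_{xx}+i\delta u_x=f,\qquad u(x,0)=u_0,\quad u(0,t)=g_0,\quad u_x(0,t)=g_1 .
\]
Write the spatial transform $\hat u(k,t)=\int_0^\infty e^{-ikx}u\,dx$ and the dispersion relation $\omega(k)=\beta k^3+\alpha k^2-\delta k$. The global relation then involves three boundary traces $u$, $u_x$, $u_{xx}$ at $x=0$. For $\beta<0$, the region $\{\operatorname{Re}(i\omega)<0\}\cap\mathbb{C}^+$ consists of a single sector. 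The two nontrivial roots $\nu_1(k),\nu_2(k)$ of $\omega(\nu)=\omega(k)$ do not both map this sector into $\mathbb{C}^-$, so one can eliminate only the unknown trace $u_{xx}(0,t)$. This yields an explicit formula
\[
u=S[u_0,g_0,g_1;f]
\]
consisting of a whole-line part and a contour integral over $\partial D^+$ involving the time transforms of $g_0,g_1$.

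Next, I would reduce to the whole-line problem by extension. Extend $u_0$ to $U_0\in H^s(\mathbb{R})$ and $f$ to $F$, solve the Cauchy problem (whose estimates I take as given from the paper's Cauchy-problem section), and subtract. This leaves a pure boundary problem with data
\[
G_0=g_0-U(0,t)\in H^{\frac{s+1}{3}},\qquad G_1=g_1-U_x(0,t)\in H^{\frac s3}.
\]
Time regularity of the traces comes from the Kato-type smoothing estimates
\[
\sup_x\|\partial_x^j U\|_{H^{(s+1-j)/3}_t}\lesssim \|U_0\|_{H^s}+\|F\|_{L^1_tH^s_x}, \qquad j=0,1 .
\]
The compatibility conditions at $s>\tfrac12$ and $s>\tfrac32$ guarantee that $G_0$ and $G_1$ vanish at $t=0$. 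After multiplying by a cutoff, they can therefore be extended by zero to functions in $H^{\frac{s+1}{3}}_0(0,\infty)$ and $H^{\frac s3}_0(0,\infty)$; the exclusion $s\ne\tfrac32$ avoids the borderline trace space.

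The main obstacle is the boundary-integral estimate: showing that
\[
\sup_{t\in[0,T]}\|S[0,G_0,G_1;0](t)\|_{H^s(\mathbb{R}_+)}\lesssim \|G_0\|_{H^{\frac{s+1}{3}}}+\|G_1\|_{H^{\frac s3}},
\]
together with the corresponding Strichartz-type bound in $L^q_tW^{s,p}_x$ for the low-regularity theorem. My approach is as follows. Deform the contour $\partial D^+$ to the rays $k=re^{i\theta}$ on which $\omega$ is real. Change variables $\tau=\omega(k)$ to turn each piece into an operator of the form
\[
\int e^{ik(\tau)x}\,\hat G_j(\tau)\,m_j(\tau)\,d\tau,
\]
where $\operatorname{Im}k(\tau)>0$. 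These are bounded in $L^2_x$ by a Laplace-transform/Hardy-type lemma, and in $H^s$ after differentiating in $x$, using $|k|^s\sim|\tau|^{s/3}$. The two traces have different weights, $m_0\sim k^2/\omega'$ and $m_1\sim k/\omega'$, which explains the exponents $\tfrac{s+1}{3}$ and $\tfrac s3$. The lower-order terms $\alpha,\delta$ force a separate treatment of the region $|k|\lesssim1$, where $\nu_j(k)$ need not be asymptotically rotations of $k$. The ratios of the determinant-type factors produced by the elimination must be shown to be bounded on the contours; I expect this to be the delicate computation. Continuity in time follows by density.

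Finally, for the nonlinear problem, define
\[
\Phi(u)=S\bigl[u_0,g_0,g_1;\kappa|u|^{\lambda-1}u\bigr]
\]
on a ball in $C([0,T];H^s(\mathbb{R}_+))$. For $s>\tfrac12$ the space $H^s$ is an algebra. The stated conditions on $\lambda$ (smoothness of $z\mapsto|z|^{\lambda-1}z$ up to order $\lceil s\rceil$) give
\[
\bigl\||u|^{\lambda-1}u-|v|^{\lambda-1}v\bigr\|_{H^s}\lesssim \bigl(\|u\|_{H^s}^{\lambda-1}+\|v\|_{H^s}^{\lambda-1}\bigr)\|u-v\|_{H^s}.
\]
Combining this with the linear estimates (forcing term controlled by $T\sup_t\|f\|_{H^s}$) shows that $\Phi$ is a contraction for small $T$. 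Uniqueness and local Lipschitz dependence on $(u_0,g_0,g_1)$ follow from the same difference estimate, since the solution map is linear in the data plus a contraction.
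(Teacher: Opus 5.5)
Your architecture matches the paper's. You use a Fokas formula eliminating only $u_{xx}(0,t)$, and split into a homogeneous Cauchy part, a forced Cauchy part, and a reduced boundary problem with data $g_j-\partial_x^jU(0,\cdot)$, zero-extended via the compatibility conditions. You then estimate the contour after $\tau=\omega(k)$ and close with an algebra-property contraction.

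\textbf{The spectral step is stated incorrectly.} This is exactly the step that forces two boundary conditions. For $\beta<0$, the region whose boundary replaces the real line is where $e^{-\omega t}\widetilde g(\omega,t)$ is \emph{not} bounded; in the paper's notation it is $D^+=\{\operatorname{Re}\omega<0\}\cap\mathbb{C}^+$. It has two components adjacent to the real axis, asymptotically $0<\arg k<\pi/3$ and $2\pi/3<\arg k<\pi$ (Figure~\ref{figure1}). The single middle sector is its complement. There both nontrivial roots $\nu\approx e^{\pm 2\pi i/3}k$ lie in $\mathbb{C}^-$. That is the $\beta>0$ picture, where two traces can be eliminated and one boundary condition suffices. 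So your stated reason is false for the region you name. The correct fact is Lemma~\ref{validity}: on the two-component $D^+$, exactly one of $\nu_\pm$ lands in $\mathbb{C}^-$.

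This changes what must be estimated:
\begin{itemize}
\item There is no determinant to control. Solving the single global relation for $\widetilde g_2$ just divides by $\beta$, leaving the polynomial multipliers $\phi_0,\phi_1$.
\item The contour is not a pair of rays with $\operatorname{Im}k>0$. Its curved parts are hyperbola branches. It also contains the real half-lines $(-\infty,c_1]$ and $[c_2,\infty)$, where a Laplace/Hardy bound is unavailable. The paper instead treats $v_{1,b},v_{6,b}$ as restrictions of whole-line Fourier integrals and uses Plancherel with the monotone substitution $\tau=p(m)$.
\item When $\alpha^2+3\beta\delta\ge0$, the branch cut of $\nu_\pm$ meets $\overline{D^+}$. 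This forces vertical detours $\gamma_2,\gamma_5$ inside $D^+$, where $\operatorname{Re}\omega<0$. There the bound $e^{cT'}\sqrt{T'}\|\psi_b\|_{L^2}$ relies on $\operatorname{supp}\psi_b\subset[0,T']$, not on a change of variables. Your ``separate treatment of $|k|\lesssim1$'' points here but does not supply this mechanism.
\end{itemize}

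\textbf{The trace bound you quote is false.} With $\|F\|_{L^1_tH^s_x}$ on the right, it fails for $j=0$ throughout $s>\frac12$. Take $F=\phi(x)\eta_\epsilon(t-t_0)$, with $\eta_\epsilon$ an approximate identity and $t_0\in(0,T)$. Then $\|F\|_{L^1_tH^s_x}=\|\phi\|_{H^s}$ stays fixed. Meanwhile the trace at $x_0$ tends to $-i\chi_{t>t_0}[S(t-t_0)\phi](x_0)$, which jumps by $|\phi(x_0)|$. A uniform bound would pass to this limit, but the limit cannot lie in $H^{(s+1)/3}_t\subset C$. Theorem~\ref{Nonhomthm} instead has $\|F\|_{L^2_tH^s_x}$ on the right and holds only for $-1+b\le s\le 2+b$, $s\ne\frac12+b$. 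Intersected over $b=0,1$ and with $s>\frac12$, this is exactly the range $\frac12<s\le2$, $s\ne\frac32$ of the theorem. Your proposal never accounts for $s\le 2$. The correct bound yields a factor $\sqrt T$, which still closes the contraction.

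\textbf{Uniqueness.} The difference estimate alone gives uniqueness only in the contraction ball. For all of $C([0,T];H^s(\mathbb{R}_+))$ you need either a continuation argument or, as in the paper, the $L^2$ energy identity. Its boundary terms include $-\frac{\beta}{2}|u_x(0,t)|^2$, which has the wrong sign when $\beta<0$. They vanish only because both $u(0,t)$ and $u_x(0,t)$ are zero for the difference.
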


\begin{theorem}[Low regularity wellposedness]\label{LowRegThem} Let $0\le s<\frac12$, $2\le \lambda\le \frac{7-2s}{1-2s}$, $
\mu=\frac{6\lambda}{(1-2s)(\lambda-1)}$, and $ r=\frac{2\lambda}{1+2(\lambda-1)s}
$. Let also $u_0\in H^s_x(\mathbb{R_+})$, $g_0\in H^{\frac{s+1}{3}}_{\text{loc},x}(\mathbb{R_+})$, $g_1\in H^{\frac{s}{3}}_{\text{loc},x}(\mathbb{R_+})$.  If $\lambda=\frac{7-2s}{1-2s}$, further assume that $u_0$ is small. Then,~\eqref{nonlinear} admits a unique solution in $u\in C([0,T];H^s_x(\mathbb{R}_+))\cap L^\mu_t(0,T;H^{s,r}_x(\mathbb{R}_+))$. Moreover, the data-to-solution map $(u_0,g_0,g_1)\mapsto u$ is locally Lipschitz continuous. 
\end{theorem}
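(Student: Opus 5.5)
The plan is a contraction mapping argument in the Strichartz-type space
\[
X_T := C([0,T];H^s_x(\mathbb{R}_+))\cap L^\mu_t(0,T;H^{s,r}_x(\mathbb{R}_+)),
\]
built on a linear solution operator for the forced linear problem on the half-line. First, I write the forced linear problem: the linear part of \eqref{nonlinear} with forcing $f$ in place of $\kappa|u|^{\lambda-1}u$, the same initial datum $u_0$, and boundary data $g_0,g_1$. I solve it via the unified transform. The global relation, together with the symmetries $k\mapsto\nu_j(k)$ of the cubic dispersion relation $\omega(k)=\beta k^3+\alpha k^2-\delta k$, eliminates the single unknown trace $u_{xx}(0,t)$. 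This leaves an explicit formula involving contour integrals over the boundary of the region in the upper half-plane where $\operatorname{Re}(i\omega)<0$, which consists of two sectors when $\beta<0$.

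Next, I decompose the solution as $u=S[u_0,0,0;f]+\big(\text{boundary part}\big)$. For the first piece, I extend $u_0$ and $f$ to the whole line using bounded Sobolev extensions, solve the Cauchy problem, and correct the resulting boundary traces by a pure boundary problem. This reduces everything to estimating the pure boundary-value operator with data in $H^{\frac{s+1}{3}}\times H^{\frac s3}$ on $(0,T)$, after extending the data to $\mathbb{R}$ with compact support in $t$. The needed extensions are zero extension, allowed for $s<\frac12$ since then the indices $\frac{s+1}{3}<\frac12$ and $\frac s3<\frac12$, and cutoff.

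The whole-line Cauchy estimates are the standard Sobolev and Strichartz bounds for the Airy-type group $e^{-t(\beta\partial_x^3-i\alpha\partial_x^2+\delta\partial_x)}$. The admissible pair $(\mu,r)$ with $\frac3\mu=\frac12-\frac1r$ comes from the stationary-phase decay $|t|^{-1/3}$. Combined with the Christ--Kiselev/TT$^*$ argument, this gives
\[
\|u\|_{L^\mu_tH^{s,r}_x}\lesssim \|u_0\|_{H^s}+\|f\|_{L^{\mu'}_tH^{s,r'}_x}.
\]

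The main obstacle is the boundary operator estimate:
\[
\|S[0,g_0,g_1;0]\|_{C_tH^s_x\cap L^\mu_tH^{s,r}_x}\lesssim \|g_0\|_{H^{\frac{s+1}{3}}}+\|g_1\|_{H^{\frac s3}}.
\]
The integrals live on rays $k=\rho e^{i\theta}$ in $\mathbb{C}^+$, where $e^{ikx}$ decays, and involve $\hat g_j(\omega(k))$ times rational combinations of the $\nu_j$. I would parametrize each ray, change variables $\tau=\omega(k)$ (which is real along these contours for large $|k|$ after deforming appropriately), and handle the large-$|k|$ and bounded-$|k|$ regions separately.

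For the large-$|k|$ part, the $H^s_x$ bound follows from the $L^2$ boundedness of Laplace-type operators $\phi\mapsto\int_0^\infty e^{-\rho x\sin\theta}\phi(\rho)\,d\rho$ on $L^2(\mathbb{R}_+)$. The Jacobian $|\omega'(k)|\sim\rho^2$ converts $H^{\frac{s+1}{3}}$ and $H^{\frac s3}$ norms into the right weights; this is where the exponents $\frac{s+1}{3}$ and $\frac s3$ appear for Dirichlet and Neumann data respectively. For the Strichartz bound, I would write the contributions on the real part of the contour as Fourier-type oscillatory integrals in $(x,t)$ and use the same decay estimate uniformly in $x>0$. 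The genuinely complex rays decay exponentially in $x$, and I would handle them by Minkowski's inequality plus Hardy--Littlewood--Sobolev in $t$. Continuity in time follows by dominated convergence.

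Finally, I run the fixed point on $\Phi(u)=S[u_0,g_0,g_1;\kappa|u|^{\lambda-1}u]$ in a ball of $X_T$. The nonlinear estimate uses the fractional chain rule and Hölder in $x$:
\[
\||u|^{\lambda-1}u\|_{H^{s,r'}}\lesssim \|u\|_{L^{p}}^{\lambda-1}\|u\|_{H^{s,r}}.
\]
Here Sobolev embedding $H^{s,r}\hookrightarrow L^p$ is used with the given choice of $r$, so that $\frac1{r'}=\frac{\lambda-1}{p}+\frac1r$. Hölder in $t$ then gives a factor $T^{\theta}$ with
\[
\theta=1-\frac{\lambda+1}{\mu}\ge0,
\]
and $\theta=0$ exactly when $\lambda=\frac{7-2s}{1-2s}$. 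The requirement $\lambda\ge2$ is used for the Lipschitz difference estimate in $H^{s,r'}$. For $\theta>0$, a small $T$ closes the contraction. In the critical case, smallness of $u_0$, and hence of the linear part in $L^\mu_tH^{s,r}$, replaces it. Uniqueness and local Lipschitz dependence on $(u_0,g_0,g_1)$ follow from the contraction and the linearity of $S$.
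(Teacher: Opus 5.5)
Your overall architecture (a contraction in $C([0,T];H^s_x)\cap L^\mu_tH^{s,r}_x$ built on Cauchy-problem estimates plus estimates for the reduced boundary problem) matches the paper's. The nonlinear step, however, fails for the pair $(\mu,r)$ fixed in the statement. You put the nonlinearity in the dual norm $L^{\mu'}_tH^{s,r'}_x$ with $\frac1{r'}=\frac{\lambda-1}{p}+\frac1r$. With $r=\frac{2\lambda}{1+2(\lambda-1)s}$ this forces $\frac1p=\frac{1-2s}{\lambda}$, which is larger than $\frac1r$ whenever $2s(\lambda+1)<1$; for $s=0$ it gives $p=\lambda<r=2\lambda$. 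In that case the embedding $H^{s,r}(\mathbb{R})\hookrightarrow L^p(\mathbb{R})$ you invoke is false. Your time exponent is also wrong at the critical power. At $\lambda=\frac{7-2s}{1-2s}$ one has $\mu=\lambda$, so $\theta=1-\frac{\lambda+1}{\mu}=-\frac1\lambda<0$, not $0$. By continuity $\theta<0$ on a whole subinterval of the admitted range (for $s=0$, on $(3+\sqrt{10},7]$). There $T^\theta$ grows as $T\to0^+$, so your large-data short-time argument does not close, although the theorem asserts it for all $\lambda<\frac{7-2s}{1-2s}$. The prescribed $(\mu,r)$ is calibrated for an estimate in $L^1_tH^s_x$, the dual of the admissible pair $(\infty,2)$; this is Lemma~\ref{L1estimates}, taken from \cite{MMO26}. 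There $\||u|^{\lambda-1}u\|_{H^s}\lesssim\|u\|_{L^p}^{\lambda-1}\|u\|_{H^{s,r}}$ with $\frac12=\frac{\lambda-1}{p}+\frac1r$ gives exactly the endpoint Sobolev relation $\frac1p=\frac1r-s$. Hölder in $t$ then yields $T^{1-\lambda/\mu}$, whose exponent vanishes precisely at $\lambda=\frac{7-2s}{1-2s}$.

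Second, your decomposition (a whole-line Duhamel term $z$ plus a boundary correction) needs bounds on the traces $\partial_x^bz|_{x=0}$ in $H_t^{\frac{s+1-b}{3}}(0,T)$, in terms of whatever norm of $f$ the nonlinear estimate controls. You never supply these bounds. Theorem~\ref{Nonhomthm} gives them only in terms of $\|F\|_{L^2_tH^s_x}$. Hölder in $t$ controls $\||u|^{\lambda-1}u\|_{L^2_tH^s_x}$ by the Strichartz norm only if $\mu\ge2\lambda$, i.e. $\lambda\le\frac{4-2s}{1-2s}$, which excludes part of the stated range. The paper avoids traces of the forcing altogether by switching, in the low-regularity regime, to the half-line Duhamel formula \eqref{lindec}, $\int_0^tS_b[f(\cdot,t'),0,0;0](x,t-t')\,dt'$. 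Since $s<\frac12$, the homogeneous IBVP bound \eqref{homs} needs no compatibility conditions, so it applies to each slice $f(\cdot,t')$ taken as initial datum. Minkowski's inequality in both components of the norm then gives $\|S_b[u_0,g_0,g_1;f]\|_{Y_T^s}\lesssim\|u_0\|_{H^s}+\|g_0\|_{H^{\frac{s+1}{3}}}+\|g_1\|_{H^{\frac s3}}+\|f\|_{L^1_tH^s_x}$, which is exactly what the $L^1_tH^s_x$ nonlinear bound feeds into. Finally, the contraction yields uniqueness only in the ball; uniqueness in the full space $C_tH^s_x\cap L^\mu_tH^{s,r}_x$ needs a separate short-time argument.
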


The proofs of Theorems \ref{HighRegThem} and \ref{LowRegThem} are provided in Section \ref{Sec_contraction}. The corresponding analysis relies on the unified transform of Fokas \cite{Fokas97,Fokas08}, used here as the boundary-value analogue of the Fourier transform. The starting point is an explicit solution formula for the associated forced linear problem on the half-line:
	\begin{equation}
		\begin{aligned}\label{linear}
			&i u_t + i \beta u_{xxx} + \alpha u_{xx} + i \delta u_x = f(x, t), \quad 0 < x < \infty, \  0 < t < T,
			\\
			&u(x, 0) = u_0 (x),\\
			&u(0, t) = g_0(t), \quad u_x (0, t) = g_1(t),
		\end{aligned}
	\end{equation}
	where $f(x, t)$ is a given forcing. The implementation of the unified transform in the present setting requires a careful treatment of the multi-term dispersive symbol, nontrivial spectral maps involving complex square roots, and contour deformations necessitated by the associated branch-cut structure. 
    
    With an explicit linear solution formula at hand, the subsequent task is to derive the Sobolev and Strichartz estimates  for the forced linear problem \eqref{linear} needed for the nonlinear iteration that leads to the proof of Theorems~\ref{HighRegThem} and~\ref{LowRegThem}. For this purpose, we adapt the method introduced in \cite{fhm2017,fhm2016} for the nonlinear Schr\"odinger and Korteweg-de Vries equations on the half-line, which has since been employed for the rigorous well-posedness of several other nonlinear initial-boundary value problems, including \cite{oy2019,hm2020,mo2025}. The nonlinear problem \eqref{nonlinear} is then handled by a contraction argument built on these linear estimates. In the high-regularity regime of Theorem  \ref{HighRegThem}, the Sobolev algebra property is sufficient to control the power nonlinearity, whereas in the low-regularity regime of Theorem \ref{LowRegThem} one must instead rely on Strichartz estimates in suitable Bessel potential spaces. In this way, we obtain local Hadamard well-posedness for both smooth and rough data, together with local Lipschitz continuity of the data-to-solution map. 
    
    We remark that the results of Theorems~\ref{HighRegThem} and~\ref{LowRegThem} cover a wide range of integer as well as non-integer nonlinearities as specified above, thanks to the use of Strichartz estimates. This is in contrast with approaches for other related equations via the use of Bourgain spaces, such as the recent works~\cite{hy2025,hy2026}, which are by nature restricted to specific integer nonlinearities of quadratic and cubic order, respectively.

We conclude by emphasizing that  the study of nonlinear evolution equations in the setting of nonhomogeneous initial-boundary value problems presents several additional challenges when compared to the more standard initial value (Cauchy) problem on the whole line. Importantly, the Fourier transform (and associated harmonic analysis toolbox) is no longer available when the spatial domain involves a boundary. Besides the method of~\cite{fhm2017,fhm2016} employed in this work, which circumvents the lack of Fourier transform via the unified transform of Fokas,  two other main approaches are the temporal Laplace transform method of \cite{bsz2002}, which has since been used in several other works such as \cite{bsz2008,kai2013,ozs2015,BO16,et2016},
and  the  boundary forcing operator method of \cite{ck2002, h2005,h2006}, recently also employed in \cite{c2017,cc2020}. 
Other noteworthy contributions on the rigorous analysis of nonlinear dispersive initial-boundary value problems are those by Faminskii~\cite{fam2004,fam2007,f2020,f2024}. 
\\[2mm]
\textit{Structure.} The derivation of the unified transform solution formula for the forced linear problem \eqref{linear} is given in Section \ref{app-s}. Sobolev  linear estimates for a certain reduced linear problem on the half-line are established in Section~\ref{red-s}.  Section \ref{ivp-s} provides a summary of existing Sobolev and Strichartz estimates for the Cauchy problem, while Strichartz estimates for the reduced half-line problem are derived in Section \ref{str-s}. Finally, the analysis leading to the well-posedness of the nonlinear problem \eqref{nonlinear} as stated in Theorems \ref{HighRegThem} and \ref{LowRegThem} is presented in Section \ref{Sec_contraction}.

\section{Linear solution formula via the unified transform}\label{app-s}

We employ the unified transform of Fokas to derive the solution formula for the forced linear problem on the half-line given by \eqref{linear}. Throughout our derivation, we work under the assumption of sufficient smoothness and decay. 
We begin by defining the half-line Fourier transform pair
\eq{\label{fi-ft-def}
	\hat \phi (k) := \int_0^\infty e^{-i k x} \phi (x) dx, \quad \text{Im}(k)\le 0,
	\qquad
	\phi (x) =
	\frac{1}{2 \pi} \int_{-\infty}^\infty e^{i k x} \hat \phi (k) dk, \quad  0 < x < \infty.
}
Taking the half-line Fourier transform \eqref{fi-ft-def} of the forced linear higher-order Schr\"odinger equation in \eqref{linear} and then integrating in $t$, we obtain 
\begin{equation}\label{globalrelation}
	e^{\omega t}\hat{u}(k,t)=\hat{u}_0(k)- i \int_0^t e^{\omega t'} \hat f(k, t') dt'+(-\beta k^2+\alpha k+\delta)\tilde{g}_0(\omega,t)+(i\beta k-i\alpha)\tilde{g}_1(\omega,t)+\beta\tilde{g}_2(\omega,t),\quad \text{Im}(k)\leq 0,
\end{equation}
where  
\eq{\omega:=-i\beta k^3+i \alpha k^2+i\delta k,\quad
	g_2 (t) = u_{xx} (0, t), 
	\quad 
	\tilde \phi (\omega, t) := \int_0^t e^{\omega t'} \phi (t') dt'. \label{tilde-transform}}
Inverting the global relation for $k\in\R$ via \eqref{fi-ft-def}, we obtain the following integral representation for the solution:
\eqs{
	\begin{split}
		u(x, t) &= \frac{1}{2 \pi} \int_{-\infty}^{\infty} e^{i k x - \omega t} \b{\hat u_0 (k) - i \int_0^t e^{\omega t'} \hat f(k, t') dt'} dk\\
		&\quad + \frac{1}{2 \pi} \int_{-\infty}^{\infty} e^{i k x -\omega t} \Big[\beta \tilde g_2 (\omega, t) + i \p{\beta k - \alpha} \tilde g_1 (\omega, t) - \p{\beta k^2 - \alpha k - \delta} \tilde g_0 (\omega, t)\Big] dk.
	\end{split} \alignlabel{intrep}}
This expression is not an explicit formula for the solution because it contains the unknown boundary value $g_2$ through its time transform defined by \eqref{tilde-transform}.  To transform \eqref{intrep} into an explicit formula, we will follow different strategies depending on the sign of the quantity $\alpha^2+3\beta\delta$ dictated by the constant parameters of the main equation.

We introduce the region $D^+:=D\cap\mathbb{C}^+$, where $D:=\{ k\in\mathbb{C}: \; \text{Re}\;\omega(k)<0 \}$ and $\mathbb{C}^+:=\{k\in\mathbb{C}: \;\text{Im }k>0\}$. A more explicit description of $D^+$ is as follows:
\begin{eqnarray}\label{Dplus}
	D^+=\left\{ k=k_R+ik_I\in\mathbb{C}^+\; : \quad3\left(k_R -\frac{\alpha}{3\beta}\right)^2-k_I^2-\frac{\alpha^2+3\beta\delta}{3\beta^2}>0 \right\},
\end{eqnarray}
where the real and imaginary parts of $k$ are denoted by $k_R$ and $k_I$, respectively. It should be noted that $\partial D^+\subseteq\{k\in \overline{\mathbb{C}^+}\,|\,\text{Re}\,{\omega(k)}=0\}$ (the equality does not always hold), see Figure \ref{figure1}.

\begin{figure}[ht]
	\centering
	\vspace{2.5cm}
		\begin{tikzpicture}[scale=0.8]
		\pgflowlevelsynccm
		\draw[line width=.5pt, black, dotted](-3.25,0.01)--(3.25,0.01);
		\draw[line width=.5pt, black, dotted](0,-0.5)--(0, 2.35);
		\draw[line width=.5pt, black](2.8,2.35)--(2.8,2.05);
		\draw[line width=.5pt, black](2.8,2.05)--(3.1,2.05);
		\node[] at (2.89, 2.225) {\fontsize{8}{8} $k$};
		\node[] at (-0.55, -0.35) {\fontsize{10}{10} $\kappa_2$};
		\node[] at (0.75, -0.35) {\fontsize{10}{10} $\kappa_3$};
		\node[] at (-1.7, 1) {\fontsize{10}{10} $D^+$};
		\node[] at (1.7, 1) {\fontsize{10}{10} $D^+$};
		\node[] at (0.15, -0.3) {\fontsize{11}{11} $\frac{\alpha}{3\beta}$};
		\draw[middlearrow={Stealth[scale=1.3, reversed]}, black] (-1,1.41) -- (-53:-1.5);
		\draw[middlearrow={Stealth[scale=1.3]}, black] (1,1.41) -- (53:1.5);
		\draw[middlearrow={Stealth[scale=1.3, reversed]}, black, line width=.5pt] (-0.565,0.025) -- (-0.5:-2.8);
		\draw[middlearrow={Stealth[scale=1.3]}, black, line width=.5pt] (0.565,0.02) -- (0.5:2.8);
		\draw[domain=0.5774:1.35, variable=\x, smooth, black, line width=0.7pt] plot ({\x}, {sqrt(3*\x*\x-1)});
		\draw[domain=-1.35:-0.5774, variable=\x, smooth, black, line width=0.7pt] plot ({\x}, {sqrt(3*\x*\x-1)});
	\end{tikzpicture}
	\hspace*{5cm}
	\begin{tikzpicture}[scale=0.8]
	\pgflowlevelsynccm
	\draw[line width=.5pt, black, dotted](-2.5,0.01)--(2.5,0.01);
	\draw[line width=.5pt, black, dotted](0,-0.7)--(0, 2.35);
	\draw[line width=.5pt, black](-2.05,0)--(2.05,0);
	\draw[line width=.5pt, black](-2.05,1.8)--(0,0);
	\draw[line width=.5pt, black](2.05,1.8)--(0,0);
	\draw[line width=.5pt, black](2.05,2.35)--(2.05,2.05);
	\draw[line width=.5pt, black](2.05,2.05)--(2.35,2.05);
	\node[] at (2.14, 2.225) {\fontsize{8}{8} $k$};
	\node[] at (0.15, -0.3) {\fontsize{11}{11} $\frac{\alpha}{3\beta}$};
	\node[] at (-1.4, 0.5) {\fontsize{10}{10} $D^+$};
	\node[] at (1.4, 0.5) {\fontsize{10}{10} $D^+$};
	\draw[middlearrow={Stealth[scale=1.3]}, black] (-1.6,0) -- (-1.5:0);
	\draw[middlearrow={Stealth[scale=1.3]}, black] (1.4,0) -- (1.5,0);
	\draw[middlearrow={Stealth[scale=1.3]}, black] (2.05,1.8) -- (1.5:0);
	\draw[middlearrow={Stealth[scale=1.3,reversed]}, black] (-2.05,1.8) -- (1.5:0);
\end{tikzpicture}
	\hspace*{5cm}
\begin{tikzpicture}[scale=0.8]
	\pgflowlevelsynccm
	\draw[line width=.5pt, black, dotted](-2.5,0.01)--(2.5,0.01);
	\draw[line width=.5pt, black, dotted](0,1)--(0, 2.35);
	\draw[line width=.5pt, black, dotted](0,0)--(0, -0.7);
	\draw[line width=.5pt, black](-2.05,0)--(2.05,0);
	\draw[line width=.5pt, black](2.05,2.35)--(2.05,2.05);
	\draw[line width=.5pt, black](2.05,2.05)--(2.35,2.05);
	\node[] at (2.14, 2.225) {\fontsize{8}{8} $k$};
	\node[] at (0.15, -0.3) {\fontsize{11}{11} $\frac{\alpha}{3\beta}$};
	\node[] at (0, 0.5) {\fontsize{10}{10} $D^+$};
	\draw[middlearrow={Stealth[scale=1.3]}, black] (-1.6,0) -- (1.6:0);
	\draw[domain=-2.0:2, variable=\x, smooth, black, line width=0.5pt] plot ({\x}, {(0.4*(\x))^2+1});
	\draw[middlearrow={Stealth[scale=1.3]}, black] (1.4,1.3136) -- (1.3,1.2704);
\end{tikzpicture}
	\vspace{7mm}
	\caption{The marked open regions show $D^+$ defined by \eqref{Dplus} for $\alpha^2+3\beta\delta>0$ (left), $\alpha^2+3\beta\delta=0$ (center), and $\alpha^2+3\beta\delta<0$ (right). In the left figure, $\kappa_2=\frac{\alpha+\sqrt{\alpha^2+3\beta\delta}}{3\beta}$ and  $\kappa_3=\frac{\alpha-\sqrt{\alpha^2+3\beta\delta}}{3\beta}$.}
	\label{figure1}
\end{figure}

Using {standard complex analytical arguments}, we rewrite \eqref{intrep} as follows:
\eqs{
	\begin{split}
		u(x, t) &= \frac{1}{2 \pi} \int_{-\infty}^{\infty} e^{i k x - \omega t} \b{\hat u_0 (k) - i \int_0^t e^{\omega t'} \hat f(k, t') dt'} dk\\
		&\quad + \frac{1}{2 \pi} \int_{\partial D^+} e^{i k x -\omega t} \Big[\beta \tilde g_2 (\omega, t) + i \p{\beta k - \alpha} \tilde g_1 (\omega, t) - \p{\beta k^2 - \alpha k - \delta} \tilde g_0 (\omega, t)\Big] dk.
	\end{split} \alignlabel{intrep_D+}}

In order to eliminate the unknown transform from \eqref{intrep_D+}, we employ the global relation \eqref{globalrelation} and replace $k$ with a nontrivial transformation of $k$, say $\nu=\nu(k)$ satisfying: 
\begin{itemize} 
	\item[(i)] the invariance property $\omega(k)=\omega(\nu(k)),$ $k\in \overline{D^+}$ and also 
	\item[(ii)] the range condition $\text{Im}(\nu(k))\le 0$ for $k\in \overline{D^+}$.
\end{itemize}
A naive attempt to find a solution to the invariance property (i) under the assumption that $\nu$ is not identity map, suggests analysis of the quadratic equation 
\begin{equation}\label{second}
	\nu^2+\left(k-\frac{\alpha}{\beta}\right)\nu+\left(k^2-\frac{\alpha}{\beta}k-\frac{\delta}{\beta}\right)=0.
\end{equation} This leads to two solutions $\nu_{\pm}$ given by
\begin{equation}\label{maps}
	\nu_{\pm}(k)=-\frac{1}{2}\left(k-\frac{\alpha}{\beta}\right)\pm i\frac{\sqrt{3}}{{2}}\left(\big( k-\frac{\alpha}{3\beta} \big)^2-\frac{4(\alpha^2+3\beta\delta)}{9\beta^2}\right)^{\frac12},
\end{equation} where the square root is defined by $z^{\frac12}=|z|e^{iArg(z)},\, Arg(z)\in (-\pi,\pi]$.
It will be shown that with this definition of the square root, one can guarantee that exactly one of $\nu_\pm$ satisfies the \emph{range condition}.  To this end, if $\alpha^2+3\beta\delta >0$, the branch cut of $\nu_\pm$ is the finite segment on the real line connecting (the branch points) $\kappa_1 := \frac{\alpha+2\sqrt{\alpha^2+3\beta\delta}}{3\beta}$ and $\kappa_4:= \frac{\alpha-2\sqrt{\alpha^2+3\beta\delta}}{3\beta}$, together with the vertical line $\text{Re}\, k=\frac{\alpha}{3\beta}$.  If $\alpha^2+3\beta\delta =0$,  the branch cut is the vertical line $\text{Re}\, k=\frac{\alpha}{3\beta}$.  Finally, if $\alpha^2+3\beta\delta <0$, the branch cut of $\nu_\pm$ is the union of two rays lying on the axis $k_R=\frac{\alpha}{3\beta}$, emanating from the branch points $\kappa_5=\frac{\alpha}{3\beta}-i\frac{2\sqrt{-(\alpha^2+3\beta\delta)}}{3\beta}$, $\kappa_6=\frac{\alpha}{3\beta}+i\frac{2\sqrt{-(\alpha^2+3\beta\delta)}}{3\beta}$, respectively, and extending to infinity. 

In the case $\alpha^2+3\beta\delta \ge 0$, before we eliminate the unknown $\tilde{g_2}(\omega,t)$ from the solution formula,  we first deform the contour of integration to a more appropriate contour as in Figure \ref{figure2} which avoids the branch cut.  This is not performed in the case $\alpha^2+3\beta\delta < 0$ because in this setting the branch cut is strictly away from $\overline{D^+}$.

\begin{figure}[ht]
	\centering
	\vspace{2.5cm}
	\begin{tikzpicture}[scale=.8]
		\pgflowlevelsynccm
		\draw[line width=.5pt, black, dotted](-3.25,0.01)--(3.25,0.01);
		\draw[line width=1.5pt, red](0,-0.75)--(0, 3.00);
  \draw[Stealth-Stealth, line width=1.5pt, red]
(0,-1.4)--(0,3.00);
		\draw[line width=.5pt, black](2.8,2.35)--(2.8,2.05);
		\draw[line width=.5pt, black](2.8,2.05)--(3.1,2.05);
		\node[] at (2.89, 2.225) {\fontsize{8}{8} $k$};
        \node[] at (-1.5, -0.35) {\fontsize{10}{10} $c_1$};
        \node[] at (1.5, -0.35) {\fontsize{10}{10} $c_2$};
		\node[] at (-0.55, -0.35) {\fontsize{10}{10} $\kappa_2$};
		\node[] at (0.6, -0.35) {\fontsize{10}{10} $\kappa_3$};
		\node[] at (-1.05, -0.35) {\fontsize{10}{10} $\kappa_1$};
		\node[] at (1.05, -0.35) {\fontsize{10}{10} $\kappa_4$};
		\node[] at (-2, 1) {\fontsize{10}{10} $\tilde{D}^+$};
		\node[] at (2, 1) {\fontsize{10}{10} $\tilde{D}^+$};
		\node[] at (0.15, -0.3) {\fontsize{11}{11} $\frac{\alpha}{3\beta}$};
		\draw[middlearrow={Stealth[scale=1.3,reversed]}, black] (-1.6,2.6) -- (-58:-2.8);
		\draw[middlearrow={Stealth[scale=1.3]}, black] (1.6,2.6) -- (58:2.8);
		\draw[middlearrow={Stealth[scale=1.3, reversed]}, black, line width=.5pt] (-1.4,0.025) -- (-0.5:-2.8);
	\draw[line width=.5pt, black, dashed](-2,0.025)--(-0.5, 0);
		\draw[line width=.5pt, black, dashed](0.5,0.025)--(2, 0);
		\draw[middlearrow={Stealth[scale=1.3]}, black, line width=.5pt] (1.4,0.025) -- (0.5:2.8);
		\draw[domain=1.4:1.8, variable=\x, smooth, black, line width=0.7pt] plot ({\x}, {sqrt(3*\x*\x-1)});
		\draw[domain=-1.4:-1.8, variable=\x, smooth, black, line width=0.7pt] plot ({\x}, {sqrt(3*\x*\x-1)});
		\draw[line width=1.5pt, red](-1.05,0.025)--(1.05,0.025);
        \fill[red] (-1.05,0.025) circle (2.5pt);
        \fill[red] (1.05,0.025) circle (2.5pt);
		\draw[middlearrow={Stealth[scale=0.9]}, line width=1pt, black](-1.4,0.025)--(-1.4, 2.2);
			\draw[middlearrow={Stealth[scale=0.9, reversed]}, line width=1pt, black](1.4,0.025)--(1.4, 2.2);
				\draw[
		domain=-0.5774:-1.5,
		samples=150,
		variable=\x,
		black,
		line width=0.8pt,
		dashed,                 
		line cap=butt           
		]
		plot ({\x}, {sqrt(3*\x*\x-1)});
		
		\draw[
domain=0.5774:1.5,
samples=150,
variable=\x,
black,
line width=0.8pt,
dashed,                 
line cap=butt           
]
plot ({\x}, {sqrt(3*\x*\x-1)});

	\end{tikzpicture}
	\hspace*{5cm}
	\begin{tikzpicture}[scale=0.8]
		\pgflowlevelsynccm

		\draw[line width=.5pt, black, dotted](-2.5,0.01)--(2.5,0.01);
		\draw[line width=.5pt, black, dotted](0,-0.7)--(0, 2.35);
		\draw[middlearrow={Stealth[scale=1.3]}, line width=.7pt, black](-2,0)--(-0.5,0);
        \node[] at (-0.6,-0.3) {\fontsize{10}{10} $c_1$};
        \node[] at (0.6, -0.3) {\fontsize{10}{10} $c_2$};
		\draw[middlearrow={Stealth[scale=1.3]}, line width=.7pt, black](0.5,0)--(2,0);
		\draw[middlearrow={Stealth[scale=1]}, line width=.7pt, black](-0.5,0)--(-0.5,0.5);
		\draw[middlearrow={Stealth[scale=1.3]}, line width=.7pt, black](-0.5,0.5)--(-2,2);
		\draw[middlearrow={Stealth[scale=1,reversed]}, line width=.7pt, black](0.5,0)--(0.5,0.5);
		\draw[middlearrow={Stealth[scale=1.3,reversed]}, line width=.7pt, black](0.5,0.5)--(2,2);
		\draw[line width=1.5pt, black, dotted](0,0)--(-0.5, 0.5);
		\draw[line width=1.5pt, black, dotted](0,0)--(0.5, 0.5);
		\draw[line width=1.5pt, black, dotted](-0.5,0)--(0.5, 0);
          \draw[Stealth-Stealth, line width=1.5pt, red]
(0,-1.4)--(0,3.00);
		\draw[line width=.5pt, black](2.05,2.35)--(2.05,2.05);
		\draw[line width=.5pt, black](2.05,2.05)--(2.35,2.05);
		\node[] at (2.14, 2.225) {\fontsize{8}{8} $k$};
		\node[] at (0.15, -0.3) {\fontsize{11}{11} $\frac{\alpha}{3\beta}$};
		\node[] at (-1.4, 0.5) {\fontsize{10}{10} $\tilde{D}^+$};
		\node[] at (1.4, 0.5) {\fontsize{10}{10} $\tilde{D}^+$};
			\end{tikzpicture}
				\hspace*{5cm}
	\begin{tikzpicture}[scale=.8]
		\pgflowlevelsynccm
		\draw[line width=.5pt, black, dotted](-2.5,0.01)--(2.5,0.01);
		\draw[line width=.5pt, black, dotted](0,1)--(0, 2.35);
		\draw[line width=.5pt, black, dotted](0,0)--(0, -0.7);
        
		\draw[line width=.5pt, black](-2.05,0)--(2.05,0);
		\draw[line width=.5pt, black](2.05,2.35)--(2.05,2.05);
		\draw[line width=.5pt, black](2.05,2.05)--(2.35,2.05);
		\node[] at (2.14, 2.225) {\fontsize{8}{8} $k$};
		\node[] at (0.15, -0.3) {\fontsize{11}{11} $\frac{\alpha}{3\beta}$};
		\node[] at (0, 0.5) {\fontsize{10}{10} ${D}^+$};
		\draw[middlearrow={Stealth[scale=1.3]}, black] (-1.6,0) -- (1.6:0);
		\draw[domain=-2.0:2, variable=\x, smooth, black, line width=0.5pt] plot ({\x}, {(0.4*(\x))^2+1});
		\draw[middlearrow={Stealth[scale=1.3]}, black] (1.4,1.3136) -- (1.3,1.2704);
\draw[-{Stealth[length=3mm, width=2mm]}, line width=1.5pt, red]
(0,1.2) -- (0,3.0);
\fill[red] (0,1.2) circle (2.5pt);
\draw[-{Stealth[length=3mm, width=2mm]}, line width=1.5pt, red]
(0,-0.5) -- (0,-1.4);
\fill[red] (0,-0.5)  circle (2.5pt);
\node[] at (-0.3, 1.3) {\fontsize{10}{10} $\kappa_5$};
\node[] at (-0.3, -0.6) {\fontsize{10}{10} $\kappa_6$};
	\end{tikzpicture}
	\vspace{7mm}
	\caption{Red lines or rays show the relevant branch cuts for $\alpha^2+3\beta\delta>0$ (left),  $\alpha^2+3\beta\delta=0$ (middle), and $\alpha^2+3\beta\delta<0$ (right).  In the left and middle figures, $\tilde{D}^+$ is an open region obtained from ${D^+}$ by removing a bounded portion of it that intersects the branch cut. Specific values of $c_1,c_2$ will be chosen later.}
	\label{figure2}
\end{figure}

We introduce the notation
\begin{equation}\label{Gamma}
	\Gamma:=\left\{	\begin{array}{ll}
		\partial \tilde{D}^+, &\alpha^2+3\beta\delta\geq0, 
		\\
		\partial D^+, &\alpha^2+3\beta\delta<0.
	\end{array}
	\right.
\end{equation} Now, using {{simple complex analytical arguments}}, we can rewrite \eqref{intrep_D+} as follows:
\eqs{
	\begin{split}
		u(x, t) &= \frac{1}{2 \pi} \int_{-\infty}^{\infty} e^{i k x - \omega t} \b{\hat u_0 (k) - i \int_0^t e^{\omega t'} \hat f(k, t') dt'} dk\\
		&\quad + \frac{1}{2 \pi} \int_{\Gamma} e^{i k x -\omega t} \Big[\beta \tilde g_2 (\omega, t) + i \p{\beta k - \alpha} \tilde g_1 (\omega, t) - \p{\beta k^2 - \alpha k - \delta} \tilde g_0 (\omega, t)\Big] dk.
	\end{split} \alignlabel{intrep_gamma}}

In order to eliminate the unknown $t$-transform of the second order boundary trace from the above formula, we will benefit from the global relation \eqref{globalrelation} in view of the lemma below.

\begin{lemma}\label{validity}
	If $k\in{D^+}$ ($k\in \overline{D^+}$), then Im $\nu_+(k)> 0$ (Im $\nu_+(k)\ge 0$), and Im $\nu_-(k)< 0$ (Im $\nu_-(k)\le 0$).
\end{lemma}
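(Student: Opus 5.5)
The proof is a direct computation after centering the variable at $\frac{\alpha}{3\beta}$. The key point is that, for the principal branch of the square root, the real part is continuous and nonnegative on all of $\mathbb{C}$, even across the branch cut. The sign of $\text{Im}\,\nu_-$ is then immediate. The sign of $\text{Im}\,\nu_+$ reduces to an inequality that turns out to be exactly the defining inequality of $D^+$ in \eqref{Dplus}.

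\textbf{Reduction.} Write $a:=\frac{\alpha}{3\beta}$, $c:=\frac{\alpha^2+3\beta\delta}{3\beta^2}$ and $k=a+z$ with $z=z_R+iz_I$, so that $z_I=k_I$. Since $-\frac12(k-\frac{\alpha}{\beta})=a-\frac z2$, formula \eqref{maps} becomes
\[
\nu_\pm(k)=a-\tfrac{z}{2}\pm i\tfrac{\sqrt3}{2}\,w^{\frac12},\qquad w:=z^2-\tfrac43 c .
\]
I read the square root in \eqref{maps} as the principal branch $|w|^{1/2}e^{i\mathrm{Arg}(w)/2}$. Writing $w^{\frac12}=p+iq$ with $p\ge 0$, we get
\[
\text{Im}\,\nu_\pm(k)=-\tfrac{k_I}{2}\pm\tfrac{\sqrt3}{2}\,p .
\]
Moreover $p=\text{Re}\,w^{\frac12}$ is a continuous function of $k$ on all of $\mathbb{C}$, including across the cuts described after \eqref{maps}.

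\textbf{The map $\nu_-$.} For $k\in D^+$ we have $k_I>0$ and $p\ge0$, hence $\text{Im}\,\nu_-<0$. For $k\in\overline{D^+}$ we have $k_I\ge0$, hence $\text{Im}\,\nu_-\le 0$.

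\textbf{The map $\nu_+$.} We need $3p^2>k_I^2$ on $D^+$. From $(p+iq)^2=w$ we get $p^2-q^2=A:=z_R^2-z_I^2-\frac43c$ and $pq=B:=z_Rz_I$. Eliminating $q$ gives $p^4-Ap^2-B^2=0$, so
\[
p^2=\tfrac12\bigl(A+\sqrt{A^2+4B^2}\bigr).
\]
The required inequality $p^2>\frac{z_I^2}{3}$ is therefore equivalent to $\sqrt{A^2+4B^2}>\frac23z_I^2-A$.

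If the right-hand side is negative, there is nothing to prove. Otherwise we square both sides, cancel $A^2$, and divide by $z_I^2>0$. Substituting $A$ and $B$, this becomes $3z_R^2>z_I^2+c$. That is precisely the condition $3(k_R-\frac{\alpha}{3\beta})^2-k_I^2-\frac{\alpha^2+3\beta\delta}{3\beta^2}>0$ in \eqref{Dplus}. Hence $\text{Im}\,\nu_+>0$ on $D^+$.

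\textbf{Closure.} The function $k\mapsto \text{Im}\,\nu_\pm(k)$ is continuous because $p$ is. So the non-strict inequalities on $\overline{D^+}$ follow from the strict ones on $D^+$ by taking limits.

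\textbf{Main obstacle.} The only delicate point is the branch bookkeeping. One must confirm that, with the convention stated after \eqref{maps}, $\text{Re}\,w^{1/2}\ge0$ everywhere and varies continuously across the cut. This is why I work only with $p=\text{Re}\,w^{1/2}$ and never with $q$, whose sign does jump across the cut. The squaring step is harmless, since both sides are nonnegative in the case where it is used.
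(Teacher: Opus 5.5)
Your proof is correct and follows essentially the same route as the paper: both use $\mathrm{Re}\, r^{1/2}\ge 0$ for the principal branch to get the sign of $\mathrm{Im}\,\nu_-$ immediately. For $\nu_+$, both reduce $\mathrm{Im}\,\nu_+>0$ to $|r|>\frac{2}{3}k_I^2-\mathrm{Re}\, r$, square in the nontrivial case, and recover exactly the defining inequality of $D^+$. Your continuity argument for the non-strict inequalities on $\overline{D^+}$, via continuity of $\mathrm{Re}\, w^{1/2}$ across the cut, spells out a step the paper only asserts.
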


\begin{proof} 
Set the notation $\displaystyle r(k)=\big( k-\frac{\alpha}{3\beta} \big)^2-\frac{4(\alpha^2+3\beta\delta)}{9\beta^2}$, $a(k)=\text{Re }r(k)$, $b(k)=\text{Im }r(k)$, $A(k)=\text{Re }{r(k)^\frac12}$, $B(k)=\text{Im }{r(k)^\frac12}$, $x=k_R-\frac{\alpha}{3\beta}$, and $y=k_I$.  Note that $Arg(r(k)^{\frac12})\in (-\frac{\pi}{2},\frac{\pi}{2}]$.  Therefore, $A(k)\ge 0$ and $\text{Im }\nu_-(k)=-\frac12 k_I-\frac{\sqrt{3}}{2}A(k)<0$ ($\le 0$) if $k\in D^+$ (if $k\in \overline{D^+}$). 

It is easy to show $\displaystyle A(k)=\sqrt{\frac{|r(k)|+a(k)}{2}}$. We claim that for $k\in D^+$, $\displaystyle A(k)>\frac{y}{\sqrt{3}}$, or equivalently $\displaystyle |r(k)|>\frac{2y^2}{3}-a(k)$.  We can assume without loss of generality the case where the right hand side of last claimed inequality is non-negative (otherwise it readily holds).  Then, \begin{equation*} |r(k)|>\frac{2y^2}{3}-a(k)\iff a^2(k)+b^2(k)>\left(\frac{2y^2}{3}-a(k)\right)^2\iff b^2(k)>\frac{4y^4}{9}-\frac{4y^2a(k)}{3}\end{equation*}  
\begin{equation*} \overbrace{\iff}^{b(k)=2xy,\, \text{divide by }4y^2} x^2>\frac{y^2}{9}-\frac{a(k)}{3} \overbrace{\iff}^{a(k) = x^2-y^2-\frac{4(\alpha^2+3\beta\delta)}{9\beta^2}} 3x^2 > y^2 +  \frac{\alpha^2+3\beta\delta}{3\beta^2} \overbrace{\iff}^{\eqref{Dplus}} k\in D^+.\end{equation*}  
Hence, our claim holds and $\text{Im }\nu_+=-\frac{1}{2}y+\frac{\sqrt{3}}{2}A(k)>0$ for $k\in D^+$.  Note that $\text{Im }\nu_+$ can vanish for some $k\in \overline{D^+}$, though it cannot be negative.
\end{proof}

\begin{remark}
    From Lemma \ref{validity}, it follows that at most one unknown boundary trace can be eliminated among the three boundary traces emerging upon integration by parts. This observation justifies why two boundary conditions need to be given for well-posedness when $\beta<0$ (contrasting with the single boundary condition case $\beta>0$~\cite{amo2024} where only one boundary condition must be given).
\end{remark}

Using Lemma \ref{validity} and the global relation \eqref{globalrelation} in view of $\omega(k)=\omega(\nu_+(k))$, we deduce for $k\in \overline{D^+}$ that
\begin{equation}\label{globalrelation_new}
	e^{\omega t}\hat{u}(\nu_+,t)=\hat{u}_0(\nu_+)- i \int_0^t e^{\omega t'} \hat f(\nu_+, t') dt'+(-\beta \nu_+^2+\alpha \nu_++\delta)\tilde{g}_0(\omega,t)+(i\beta \nu_+-i\alpha)\tilde{g}_1(\omega,t)+\beta\tilde{g}_2(\omega,t).
\end{equation}
Solving \eqref{globalrelation_new} for $\tilde{g}_2(\omega,t)$ and using this in \eqref{intrep_gamma}, we obtain 

\eqs{
	\begin{split}
		u(x, t) &= \frac{1}{2 \pi} \int_{-\infty}^{\infty} e^{i k x - \omega t} \b{\hat u_0 (k) - i \int_0^t e^{\omega t'} \hat f(k, t') dt'} dk\\
        &\quad - \frac{1}{2 \pi} \int_{\Gamma} e^{i k x -\omega t}\b{ \hat{u}_0(\nu_+)dk - i\int_0^t e^{\omega t'} \hat f(\nu_+, t') dt'} dk\\
        &\quad + \frac{1}{2 \pi} \int_{\Gamma} e^{i k x -\omega t} \Big[(k-\nu_+)(\alpha-\beta k-\beta \nu_+)\tilde{g}_0[\omega,t]+i\beta(k-\nu_+)\tilde{g}_1[\omega,t]\Big] dk\\
        &\quad -\frac{1}{2 \pi} \int_{\Gamma}e^{i k x}\hat{u}(\nu_+,t)dk.
	\end{split} \alignlabel{intrep_gamma_new}}
{By standard arguments of the Fokas's method}, the last integral involving $\hat{u}(\nu_+,t)$ vanishes. Moreover, in the integrals over $\Gamma$, the terms $\int_0^t e^{\omega t'} \hat f(\nu_+, t') dt'$, $\tilde{g}_0(\omega,t)$, and $\tilde{g}_1(\omega,t)$ can be replaced with 
$\int_0^T e^{\omega t'} \hat f(\nu_+, t') dt'$, $\tilde{g}_0(\omega,T)$, and $\tilde{g}_1(\omega,T)$, respectively.  Therefore, the explicit solution formula for the full linear ibvp takes the form
\eqs{
	\begin{split}
		u(x, t) &= \frac{1}{2 \pi} \int_{-\infty}^{\infty} e^{i k x - \omega t} \b{\hat u_0 (k) - i \int_0^t e^{\omega t'} \hat f(k, t') dt'} dk\\
        &\quad - \frac{1}{2 \pi} \int_{\Gamma} e^{i k x -\omega t}\b{ \hat{u}_0(\nu_+)dk - i\int_0^T e^{\omega t'} \hat f(\nu_+, t') dt'} dk\\
        &\quad + \frac{1}{2 \pi} \int_{\Gamma} e^{i k x -\omega t} \Big[(k-\nu_+)(\alpha-\beta k-\beta \nu_+)\tilde{g}_0(\omega,T)+i\beta(k-\nu_+)\tilde{g}_1(\omega,T)\Big] dk, \quad t\in (0,T).
	\end{split} \alignlabel{intrep_gamma_new}}

		\section{Sobolev estimates on the half-line} \label{red-s}
		
		We begin with a \textit{reduced} initial-boundary value problem:
		\eq{\label{HNLSreduced}
			\begin{aligned}
				&i v_t + i \beta v_{xxx} + \alpha v_{xx} + i \delta v_x = 0, \quad x>0, \ t>0,
				\\
				&v(x, 0) = 0,\\
				&v(0, t) = \psi_0(t), \quad v_x (0, t) = \psi_1(t),
			\end{aligned}
		}
		where the boundary data $\psi_0, \psi_1$ are globally defined on $\R$ but only supported in a compact set $[0, T']$, i.e.
		\eq{\label{supp-cond}
			\text{supp}(\psi_0) \subset [0, T'], \quad \text{supp}(\psi_1) \subset [0, T'].
		}
	 Notably, in addition to the Dirichlet condition, we now also have a Neumann condition compared to the case of $\beta>0$~\cite{amo2024}. Applying \eqref{intrep_gamma_new} to the reduced ibvp and using $\mathcal{F}[\psi_j](i\omega)=\tilde{\psi}_0(\omega,T)$, $j=0,1$, we get the following representation formula for $v$:
\eqs{
	\begin{split}
		v(x, t) &=\sum_{j=0}^1\frac{1}{2 \pi} \int_{\Gamma} e^{i k x -\omega(k) t} \phi_j(k)\mathcal{F}[\psi_j](i\omega(k)) dk, \quad t\in (0,T),
	\end{split} \alignlabel{sol_for_reduced}}
    where $$\phi_0(k)=(k-\nu_+(k))(\alpha-\beta k-\beta \nu_+(k))\quad \text{ and }\quad \phi_1(k)=i\beta(k-\nu_+(k)).$$
    
\subsection*{Real parametrization of $\Gamma$}
We find a parameterization of $\Gamma$ with respect to a real variable, depending on the sign of $\alpha^2+3\beta\delta$:
\begin{eqnarray}\label{paramet1}
	\Gamma=\left\{\begin{array}{l}
		\gamma_1\cup\gamma_2\cup\gamma_3\cup(-\gamma_4)\cup(-\gamma_5)\cup\gamma_6,\qquad\text{if }\alpha^2+3\beta\delta\geq0,\\
		\gamma_1\cup\gamma_3\cup(-\gamma_4)\cup\gamma_6,\qquad\text{if }\alpha^2+3\beta\delta<0.
	\end{array}	\right.
\end{eqnarray}
where
\begin{equation}\label{paramet2}
	\begin{aligned}
		&\gamma_1(m)=m,\qquad -\infty<m\leq c_1;
		\\
		&\gamma_2(m)=c_1+im,\qquad 0< m<\lambda;
		\\
		&\gamma_3(m)=\frac{\alpha+\sqrt{3\beta^2m^2+\alpha^2+3\beta\delta}}{3\beta}+im,\qquad \lambda\leq m<\infty;
		\\
		&\gamma_4(m)=\frac{\alpha-\sqrt{3\beta^2m^2+\alpha^2+3\beta\delta}}{3\beta}+im,\qquad \lambda\leq m<\infty;
		\\
		&\gamma_5(m)=c_2+im,\qquad 0< m<\lambda;
		\\
		&\gamma_6(m)=m,\qquad c_2\leq m<\infty
	\end{aligned}
\end{equation}
such that $c_1=\frac{\alpha+\sqrt{3\beta^2\lambda^2+\alpha^2+3\beta\delta}}{3\beta}$, $c_2=\frac{\alpha-\sqrt{3\beta^2\lambda^2+\alpha^2+3\beta\delta}}{3\beta}$, and the fixed $\lambda$ is chosen so that
\begin{eqnarray}\label{cursor}
	\left\{\begin{array}{l}
		\lambda=\frac{\sqrt{-3(\alpha^2+3\beta\delta)}}{-3\beta}\qquad\text{if }\alpha^2+3\beta\delta<0,\\
		\frac{\sqrt{\alpha^2+3\beta\delta}}{-\beta}<\lambda<\infty\qquad\text{if }\alpha^2+3\beta\delta\geq0.
	\end{array}	\right.
\end{eqnarray}
Notice that $c_1=c_2=\frac{\alpha}{3\beta}$ when $\alpha^2+3\beta\delta<0$.  Using the above parameterization, we can rewrite the solution in the form
\begin{equation}\label{qnr1}
	v(x,t):=\sum_{n=1}^{6}\sum_{b=0}^{1}v_{n,b}(x,t),
\end{equation}
where
\begin{equation}\label{qnr2}
	v_{n,b}(x,t)=\frac{1}{2\pi}\int_{I_n}e^{i\gamma_n(m)x-w(\gamma_n(m))t}\phi_b(\gamma_n(m))\mathcal{F}[\psi_b](i\omega(\gamma_n(m)))\gamma'_n(m)dm,
\end{equation}
with the exception of setting
\begin{equation}\label{qnr3}
	v_{2,0}=v_{2,1}=v_{5,0}=v_{5,1}\equiv0,\qquad\text{if }\alpha^2+3\beta\delta<0.
\end{equation}
In \eqref{qnr2}, the intervals, $I_n$, $n=1,...,6$, are defined by
\begin{equation}\label{qnr4}
	\begin{aligned}
		&I_1=(-\infty,c_1];\quad I_2=(-I_5)=(0,\lambda);\quad I_3=(-I_4)=[\lambda,\infty);\quad I_6=[c_2,\infty),
	\end{aligned}
\end{equation}
where the minus sign in front of $I_4$ and $I_5$ in \eqref{qnr4} is not used in the algebraic sense; its purpose is to emphasize that the orientation of the integration is reversed with respect to the endpoints of the given interval.

We prove the following theorem.
\begin{theorem}[Sobolev estimates]
\label{reduced}
Let $s\ge 0$, and $\psi_0\in H_t^{\frac{s+1}{3}}(\mathbb{R})$, $\psi_1\in H_t^{\frac{s}{3}}(\mathbb{R})$ satisfy \eqref{supp-cond}.  Then, the solution of \eqref{HNLSreduced} represented by the formula \eqref{sol_for_reduced} satisfies 
\begin{equation}
    \|v(\cdot,t)\|_{H^s_x(\mathbb{R}_+)}\lesssim (1+e^{cT'}\sqrt{T'})\sum_{b=0}^1\|\psi_b\|_{H_t^{\frac{s+1-b}{3}}},\quad t\in [0,T'],
\end{equation} where $c$ and the constant of the inequality may depend only on the fixed parameters such as $s,\alpha,\beta$, and $\delta$.
    
\end{theorem}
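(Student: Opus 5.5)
The estimate will be proved term by term, using the decomposition \eqref{qnr1}--\eqref{qnr2}. It suffices to bound each $\|v_{n,b}(\cdot,t)\|_{H^s_x(\mathbb{R}_+)}$ by the right-hand side. The pieces split into three types. The real-line pieces $v_{1,b}$, $v_{6,b}$ live on $\operatorname{Im}\gamma=0$. The unbounded complex pieces $v_{3,b}$, $v_{4,b}$ follow the curve $\operatorname{Re}\omega=0$. The bounded vertical segments $v_{2,b}$, $v_{5,b}$ appear only when $\alpha^2+3\beta\delta\ge0$. Throughout, I use the change of variables $\tau=-i\omega(\gamma_n(m))$. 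On $\gamma_1,\gamma_3,\gamma_4,\gamma_6$ we have $\operatorname{Re}\omega(\gamma_n(m))=0$, since these lie on the real axis or on $\partial D^+$. Hence $\tau$ is real there, and $|d\tau/dm|=|\omega'(\gamma_n)\gamma_n'|\sim 1+m^2$ for large $|m|$, away from the finitely many critical points $\kappa_2,\kappa_3$. The point of choosing $c_1,c_2$ beyond the branch cut is that $\omega'$ does not vanish on $I_1,I_6$. We also have $|\phi_0(\gamma)|\lesssim 1+|\gamma|^2$ and $|\phi_1(\gamma)|\lesssim 1+|\gamma|$, because $\nu_+(k)\sim e^{\pm 2\pi i/3}k$ asymptotically. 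So after substitution the weights become $(1+|\tau|)^{2/3}$ and $(1+|\tau|)^{1/3}$, which is exactly what produces the indices $\frac{s+1}{3}$ and $\frac{s}{3}$.

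For the pieces $v_{1,b}$ and $v_{6,b}$, I will write $v_{1,b}(x,t)=\frac{1}{2\pi}\int_{\mathbb{R}} e^{imx}\chi_{I_1}(m)e^{-\omega(m)t}\phi_b(m)\mathcal{F}[\psi_b](i\omega(m))\,dm$. Since $|e^{-\omega t}|=1$ on the real line, this is the inverse Fourier transform of an explicit function. It therefore extends to all $x\in\mathbb{R}$, and Plancherel gives
\[
\|v_{1,b}(t)\|_{H^s_x}^2\lesssim\int_{I_1}(1+m^2)^s|\phi_b(m)|^2|\mathcal{F}[\psi_b](i\omega(m))|^2\,dm .
\]
Substituting $\tau=-i\omega(m)$, which is real and monotone on $I_1$, turns this into $\int (1+|\tau|)^{\frac{2s}{3}+\frac{2(2-b)}{3}-\frac{2}{3}}|\mathcal{F}\psi_b(\tau)|^2\,d\tau\lesssim\|\psi_b\|^2_{H^{(s+1-b)/3}}$. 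This is the case $b=0$ gives $\frac{2(s+1)}{3}$, and the case $b=1$ gives $\frac{2s}{3}$. The same computation handles $v_{6,b}$.

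For the pieces $v_{3,b}$ and $v_{4,b}$, the exponential $e^{i\gamma_n(m)x}$ decays in $x$ because $\operatorname{Im}\gamma_n=m\ge\lambda>0$, and $|e^{-\omega t}|=1$. Here I would use the standard Fokas-type argument of \cite{fhm2017,fhm2016}. First treat integer $s$; the general case then follows by interpolation. For integer $s$, $\partial_x^j$ brings down $\gamma_n(m)^j$, and the $x$-$L^2(\mathbb{R}_+)$ norm of $\int e^{i\gamma_n(m)x}F(m)\,dm$ is bounded by the $L^2$-boundedness of the Laplace-type operator with kernel $1/(\gamma_n(m)-\overline{\gamma_n(m')})$. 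Since $\operatorname{Re}\gamma_n\sim\pm m/\sqrt3$ and $\operatorname{Im}\gamma_n=m$, this kernel is of the form $(m+m')^{-1}$, i.e.\ the Hilbert/Carleman kernel, which is bounded on $L^2(0,\infty)$. This yields $\|v_{3,b}\|_{H^s}\lesssim\|(1+m)^{s}\phi_b F_b\|_{L^2(I_3)}$, and the same $\tau$ substitution finishes as before. I expect this step to be the main technical obstacle. It requires the kernel comparison with the precise nonlinear parametrization $\gamma_3,\gamma_4$ and with $\nu_+$ involving the complex square root, including uniform asymptotics of $\phi_b$ near $m=\lambda$. I would first try to reduce it to the Carleman bound by showing that $|\gamma_n(m)-\overline{\gamma_n(m')}|\gtrsim m+m'$.

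For the bounded segments $v_{2,b}$ and $v_{5,b}$ (and any compact portions near the branch points), $\operatorname{Re}\omega(\gamma_n(m))$ may be negative, so $|e^{-\omega t}|\le e^{ct}\le e^{cT'}$. Here $\mathcal{F}[\psi_b](i\omega)=\int_0^{T'}e^{\omega t'}\psi_b(t')\,dt'$ is bounded via Cauchy--Schwarz by $e^{cT'}\sqrt{T'}\|\psi_b\|_{L^2}$. This uses the support condition \eqref{supp-cond}. Since $m$ ranges over a compact interval and $e^{i\gamma_n x}$ decays exponentially in $x$ (for $m>0$), or near $m=0$ one uses $|\int_0^\lambda e^{-mx}\,dm|\lesssim(1+x)^{-1}\in L^2$, all $x$-derivatives are harmless. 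This gives the contribution $e^{cT'}\sqrt{T'}\sum_b\|\psi_b\|_{L^2}$. Summing the three cases yields the claim.
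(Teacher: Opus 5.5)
Your proposal is essentially the paper's proof: the same splitting into the pieces $v_{n,b}$, Plancherel plus the monotone substitution $\tau=i\omega$ for the real-axis pieces, and the compact-segment bound $e^{cT'}\sqrt{T'}\,\|\psi_b\|_{L^2}$ for $\gamma_2,\gamma_5$; for the $\gamma_3,\gamma_4$ pieces you use an $L^2_x(\mathbb{R}_+)$ bound that is the paper's Laplace-transform bound in another form (your kernel is dominated by $1/(m+m')$ at once, since $\operatorname{Im}(\gamma_n(m)-\overline{\gamma_n(m')})=m+m'$), followed by the same substitution and interpolation from integer $s$. One caution, which the paper's written argument shares: when $\alpha^2+3\beta\delta<0$ the endpoint $m=\lambda$ is the vertex of the hyperbola, where $\gamma_3'(m)\sim(m-\lambda)^{-1/2}$, so once the Jacobian is kept the $L^2_m$ norm of the amplitude is in general infinite there --- the real difficulty near $m=\lambda$ is this Jacobian, not $\phi_b$ (which is smooth there), and it disappears if you parametrize by $\operatorname{Re}k$ as in the paper's $\tilde{\gamma}_3$, or if you treat a neighbourhood of the vertex like $v_{2,b}$.
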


\subsection*{Sobolev estimate for $v_{1,b}$ and $v_{6,b}$, $b=0,1$} The estimates of $v_{1,b}$ and $v_{6,b}$ are done in the same way. Therefore, we only give details for $v_{1,b}$. Notice that $v_{1,b}(x,t)=\mathcal{F}^{-1}[\tilde{V}_{1,b}(\cdot,t)](x)$ for $x\in \mathbb{R}_+$, where $\tilde{V}_{1,b}(m,t)=
	e^{-w(m)t}\phi_b(m)\mathcal{F}[\psi_b](i\omega(m))\chi_{m\le c_1}(m)$. Let $V_{1,b}(x,t)=\mathcal{F}^{-1}[\tilde{V}_{1,b}(\cdot,t)](x)$ for $x\in \mathbb{R}$. 
It follows that 
\begin{eqnarray*}
	||v_{1,b}(\cdot,t)||_{H^s_x(\mathbb{R}_+)}^2
	&\le ||V_{1,b}(\cdot,t)||_{H^s_x(\mathbb{R})}^2 &=\int_{-\infty}^{c_1}(1+m^2)^s|\phi_b(m)|^2|\mathcal{F}[{\psi}_b](iw(m))|^2dm.
\end{eqnarray*}
We change variables via $\tau=i\omega(m)=\beta m^3-\alpha m^2-\delta m=:p(m)$, $\text{dom}(p)= (-\infty,c_1]$. It should be noted that {$p$ is monotonically decreasing on its domain}, and hence is one-to-one and admits an inverse in its range. Therefore, we may write
$$||V_{1,b}(\cdot,t)||_{H^s_x(\mathbb{R})}^2=\int_{iw(c_1)}^\infty (1+m_\tau^2)^s|\phi_b(m_\tau)|^2|\mathcal{F}[{\psi}_b](\tau)|^2\frac{1}{|p'(m_\tau)|}d\tau,$$ where $m_\tau=p^{-1}(\tau)$, $\tau\in [i\omega(c_1),\infty)$.  {It is not difficult to show that there exists a positive constant  $c$ depending on $\alpha,\beta,\delta,s,$ and $b$ such that}
\begin{equation}\label{asymptotic}
	\sup_{\tau\geq iw(c_1)} \frac{(1+m_\tau^2)^s|\phi_b(m_\tau)|^2}{(1+\tau^2)^{\frac{s+1-b}{3}}|p'(m_\tau)|}\leq c.
\end{equation}

Thus, 
\begin{equation}\label{v1bpsib}||v_{1,b}(\cdot,t)||_{H^s_x(\mathbb{R}_+)}^2\le c\int_{i\omega(c_1)}^\infty(1+\tau^2)^{\frac{s+1-b}{3}}|\mathcal{F}[{\psi}_b](\tau)|^2d\tau\lesssim \|\psi_b\|_{H_t^{\frac{s+1-b}{3}}(\mathbb{R})}^2.\end{equation}

\subsection*{Sobolev estimate for $v_{2,b}$ and $v_{5,b}$, $b=0,1$} We give details for $v_{2,b}$ since $v_{5,b}$ can be treated in a similar way.  Also we consider only the case $\alpha^2+3\beta\delta\geq0$ since otherwise $v_{2,b}=v_{5,b}\equiv 0$. Differentiating \eqref{qnr2} $j$-times ($j\ge 0$) and taking the $L_x^2(\mathbb{R}_+)$-norm, we get
\begin{align*}
	\|\partial_x^j v_{2,b}(\cdot,t)\|_{L^2_x(\mathbb{R}_+)}^2 &=\frac{1}{4\pi^2}\int_0^\infty\left|\int_{0}^{\lambda} (i\gamma_2(m))^je^{i\gamma_2(m)x-w(\gamma_2(m))t}\phi_b(\gamma_2(m))\tilde{\psi}_b(w(\gamma_2(m)),T')\;dm\right|^2dx\\
    &\lesssim \int_0^\infty\left(\int_{0}^{\lambda} e^{-mx-\text{Re }(\omega(\gamma_2(m))t)}|\gamma_2(m)|^{j}|\phi_b(\gamma_2(m))|\;|\tilde{\psi}_b(w(\gamma_2(m)),T')|\;dm \right)^2  dx \\
    &\lesssim \int_0^\lambda\left(e^{-\text{Re }(\omega(\gamma_2(m))t)}|\gamma_2(m)|^{j}|\phi_b(\gamma_2(m))|\;|\tilde{\psi}_b(w(\gamma_2(m)),T')|\right)^2dm\\
     &\lesssim T'e^{cT'}\|\psi_b\|_{L^2_t(0,T')}^2,
\end{align*} where the constants of the inequalities may depend on $j, \alpha,\beta,\gamma$, and $\lambda$ above. The second inequality in above estimate uses {boundedness of the Laplace-transform from $L^2_m(\mathbb{R}_+)$ into $L^2_x(\mathbb{R}_+)$} and the last inequality uses the simple estimate $$|\tilde{\psi}_b(w(\gamma_2(m)),T')|\le \int_0^{T'}|\psi_b(t')|dt'\le (T')^{\frac12}\|\psi_b\|_{L^2_t(0,T')}$$ and the observation $\text{Re }(-w(\gamma_2(m))t)=-\beta tm(\lambda^2-m^2)\le \frac{-2\beta\lambda^3 T'}{3\sqrt{3}}$ for $0\le m\le \lambda$ and $0\le t\le T'$.
Summing the relevant estimates from $j=0$ to $j=\lceil s\rceil$ and interpolating we find for $s\ge 0$ that 
\begin{equation}\label{q20est}
	\|v_{2,b}(\cdot,t)\|_{H^s_x(\mathbb{R}_+)}\lesssim e^{cT'}\sqrt{T'}\|\psi_b\|_{L_t^2(0,T')}\le e^{cT'}\sqrt{T'} \|\psi_b\|_{H_t^{\frac{s+1-b}{3}}(0,T')}.
\end{equation}

\subsection*{Sobolev estimate for $v_{3,b}$ and $v_{4,b}$, $b=0,1$}
We will prove the Sobolev estimate for $v_{3,b}$; the estimate for $v_{4,b}$ can be proved similarly.  
Differentiating \eqref{qnr2} $j$-times ($j\ge 0$) and taking the $L_x^2(\mathbb{R}_+)$-norm, we get

\eqs{
	\begin{split}
	\|\partial_x^j v_{3,b}(\cdot,t)\|_{L^2_x(\mathbb{R}_+)}^2 & =\frac{1}{4\pi^2}\int_0^\infty\left|\int_{\lambda}^{\infty} (i\gamma_3(m))^je^{i\gamma_3(m)x-w(\gamma_2(m))t}\phi_b(\gamma_3(m))\mathcal{F}[\psi_b](iw(\gamma_3(m)))\;dm\right|^2dx\\
    &\lesssim \int_0^\infty\left(\int_{\lambda}^{\infty} e^{-mx}|\gamma_3(m)|^{j}|\phi_b(\gamma_3(m))|\;|\mathcal{F}[\psi_b](iw(\gamma_3(m)))|\;dm \right)^2  dx \\
    &\lesssim \int_\lambda^\infty\left(|\gamma_3(m)|^{j}|\phi_b(\gamma_3(m))|\;|\mathcal{F}[\psi_b](iw(\gamma_3(m)))|\right)^2dm.
\end{split}
}

We change variable by setting $\tau = i\omega(\gamma_3(m)) =: p(m)$, $m\in [\lambda,\infty)$.  We have $\tau\in \mathbb{R}$ as $\gamma_3(m)\in \partial D^+$ and $\text{Re } \omega(\gamma_3(m))=0$. Also, Range$(\tau)=(-\infty, i\omega(c_1+i\lambda)]$ and $p$ is monotone decreasing. Therefore,
\eqs{
	\begin{split}
	\|\partial_x^j v_{3,b}(\cdot,t)\|_{L^2_x(\mathbb{R}_+)}^2  
    &\lesssim \int_{-\infty}^{i\omega(c_1+i\lambda)}\left(|\gamma_3(m_\tau)|^{j}|\phi_b(\gamma_3(m_\tau))|\;|\mathcal{F}[\psi_b](\tau)|\right)^2\frac{1}{|p'(m_\tau)|}d\tau,
\end{split}
} where $m_\tau=p^{-1}(\tau)$.  {One can show that there exists a positive constant  $c$ depending on $\alpha,\beta,\delta,j,$ and $b$ such that}
\begin{equation}\label{asymptotic}
	\sup_{\tau\le iw(c_1+i\lambda)} \frac{|\gamma_3(m_\tau)|^{2j}\phi_b(\gamma_3(m_\tau))|^2}{(1+\tau^2)^{\frac{j+1-b}{3}}|p'(m_\tau)|}\leq c.
\end{equation}
Thus, the following estimate follows for $s\in \mathbb{N}_0$, but also via interpolation for all real $s\ge 0$:
$$||v_{3,b}(\cdot,t)||_{H^s_x(\mathbb{R}_+)}^2\lesssim\int_{-\infty}^{i\omega(c_1+i\lambda)}(1+\tau^2)^{\frac{s+1-b}{3}}|\mathcal{F}[{\psi}_b](\tau)|^2d\tau\lesssim \|\psi_b\|_{H_t^{\frac{s+1-b}{3}}(\mathbb{R})}^2.$$

\section{Sobolev and Strichartz estimates for Cauchy problems}\label{ivp-s}

In this section, we review the well-posedness features of homogeneous and non-homogeneous Cauchy problems for the higher-order linear Schrödinger equation.
\subsection{Homogeneous Cauchy problem}
We consider the initial value problem
 \begin{eqnarray}\label{cauchy1}
 	\begin{array}{l}
 		iy_t+i\beta y_{xxx}+\alpha y_{xx}+i\delta y_x = 0,  \qquad x,t\in\mathbb{R},\\
 		y(x,0) = y_0(x), \qquad x\in \mathbb{R},
 	\end{array}
 \end{eqnarray}	where $y_0\in H^s_x(\mathbb{R})$, $\beta<0$ (or more generally $\beta\neq 0$), $\alpha,\delta\in \mathbb{R}$.

\begin{theorem}[Sobolev estimates]\label{cauchylemma}
	Let $s\in\mathbb R$, $b\in \{0,1\}$. The unique solution of the Cauchy problem \eqref{cauchy1}, denoted by $y=S[y_0;0]$, belongs to $C(\mathbb{R}_t;H_x^s(\mathbb{R}))$ and satisfies the conservation law
	\begin{equation}\label{cuachyspaceest}
		\left\| y(\cdot ,t) \right\|_{H_x^s(\mathbb{R})}= \left\| y_0 \right\|_{H_x^s(\mathbb{R})},\quad t\in\mathbb{R}.
	\end{equation}
	Moreover, if  $\alpha^2+3\beta\delta\ge 0$, then $\partial_x^by\in C(\mathbb{R}_x;H_t^{\frac{s+1-b}{3}}(-T,T))$ for any $T>0$ and there exists a constant $c=c(s,T\alpha,\beta,\delta)\geq 0$ 
    such that
\begin{equation}\label{cauchyextra3}
		\sup_{x\in\mathbb{R}}\left\|\partial_x^by(x,\cdot)\right\|_{H_t^{\frac{s+1-b}{3}}(-T,T)}\leq 
        c \left\|y_0\right\|_{H_x^s(\mathbb{R})},
	\end{equation}
	while if $\alpha^2+3\beta\delta<0,$ then $\partial_x^by\in C(\mathbb{R}_x;H_t^{\frac{s+1-b}{3}}(\mathbb{R}))$ and there is a constant $c=c(s,\alpha,\beta,\delta)\geq 0$ such that
	\begin{equation}\label{cauchyextra3*}
		\sup_{x\in\mathbb{R}}\left\|\partial_x^by(x,\cdot)\right\|_{H_t^{\frac{s+1-b}{3}}(\mathbb{R})}\leq c \left\|y_0\right\|_{H_x^s(\mathbb{R})}.
	\end{equation}
\end{theorem}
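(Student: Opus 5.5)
The plan is to work entirely on the Fourier side. Write $y(x,t)=\frac{1}{2\pi}\int_{\mathbb R} e^{ikx-\omega(k)t}\hat y_0(k)\,dk$ with $\omega(k)=-i\beta k^3+i\alpha k^2+i\delta k$. Since $\omega$ is purely imaginary for real $k$, the factor $e^{-\omega t}$ is unimodular, and Plancherel immediately gives the conservation law \eqref{cuachyspaceest}. Strong continuity in $t$ follows by dominated convergence applied to $(1+k^2)^{s}|e^{-\omega(k)t}-e^{-\omega(k)t_0}|^2|\hat y_0|^2$. The real work is the time-trace estimate, which I will prove in the Kato smoothing style by changing variables $\tau=i\omega(k)=p(k)=\beta k^3-\alpha k^2-\delta k$, exactly as in the estimate for $v_{1,b}$ in Section \ref{red-s}.

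The derivative $p'(k)=3\beta k^2-2\alpha k-\delta$ has discriminant $4(\alpha^2+3\beta\delta)$. Consider first the case $\alpha^2+3\beta\delta<0$. Here $p'$ has fixed sign, so $p$ is a bijection of $\mathbb R$, and $|p'(k)|\gtrsim 1+k^2$ uniformly. Writing
\[
\partial_x^b y(x,t)=\frac{1}{2\pi}\int_{\mathbb R} e^{i m_\tau x} e^{-i\tau t}(i m_\tau)^b\hat y_0(m_\tau)\frac{d\tau}{|p'(m_\tau)|},
\]
we see that this is an inverse Fourier transform in $t$ of the function
\[
\tau\mapsto (im_\tau)^b e^{im_\tau x}\hat y_0(m_\tau)/|p'(m_\tau)|.
\]
Hence
\[
\|\partial_x^b y(x,\cdot)\|_{H^{\frac{s+1-b}{3}}_t(\mathbb R)}^2\simeq\int (1+\tau^2)^{\frac{s+1-b}{3}}\frac{|m_\tau|^{2b}|\hat y_0(m_\tau)|^2}{|p'(m_\tau)|^2}\,d\tau .
\]
Undoing the substitution ($d\tau=|p'|dk$) turns this into $\int (1+p(k)^2)^{\frac{s+1-b}{3}}|k|^{2b}|\hat y_0(k)|^2|p'(k)|^{-1}dk$. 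Since $(1+p^2)^{1/3}\simeq 1+k^2$ and $|p'|\simeq1+k^2$, the weight is $\lesssim (1+k^2)^{s+1-b+b-1}=(1+k^2)^s$, uniformly in $x$ because $|e^{ikx}|=1$. Continuity in $x$ then follows by dominated convergence. This yields \eqref{cauchyextra3*}.

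When $\alpha^2+3\beta\delta\ge0$, $p'$ vanishes at real points ($\kappa_2,\kappa_3$, or the double point $\frac{\alpha}{3\beta}$), so the global change of variables degenerates. I will split $\hat y_0=\hat y_0\chi_{|k|>R}+\hat y_0\chi_{|k|\le R}$, with $R$ chosen so that $|p'|\gtrsim 1+k^2$ on $|k|>R$. On $\{k>R\}$ and $\{k<-R\}$ separately, $p$ is monotone, and the high-frequency piece is handled exactly as above, giving a bound on all of $\mathbb R_t$. For the low-frequency piece $y_{\rm low}$, I will use the finite time window: with $\theta\in C_c^\infty$ a cutoff equal to $1$ on $(-T,T)$, the $H^{\sigma}(-T,T)$ norm is bounded by $\|\theta\,\partial_x^b y_{\rm low}(x,\cdot)\|_{H^{\sigma}(\mathbb R)}$ for $\sigma=\lceil\frac{s+1-b}{3}\rceil$. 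The time derivatives $\partial_t^j\partial_x^b y_{\rm low}$ equal $\int_{|k|\le R}(-\omega)^j(ik)^b e^{ikx-\omega t}\hat y_0\,dk$. These are bounded pointwise by $C_R\|\hat y_0\|_{L^2(|k|\le R)}\le C_R\|y_0\|_{H^s}$ via Cauchy--Schwarz over a bounded set, so the $L^2(-T,T)$ norms are $\lesssim \sqrt T\,C_R\|y_0\|_{H^s}$. This produces the $T$-dependence of the constant in \eqref{cauchyextra3}; continuity in $x$ is clear.

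The main obstacle is the degenerate case: it must be verified that the cut-off region can be chosen to absorb every zero of $p'$ (including the double zero when $\alpha^2+3\beta\delta=0$), and that the high-frequency branches are each monotone with $p$ covering a half-line, so the Plancherel-in-$\tau$ step applies on each branch separately with the weight comparison $(1+p^2)^{1/3}\simeq 1+k^2$. This is elementary but requires care with the interpolation to noninteger $\frac{s+1-b}{3}$ (for $s<0$, the low-frequency part is still trivially fine).
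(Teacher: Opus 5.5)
Your proposal is correct and is essentially the argument the paper relies on. For $b=0$ the paper defers to the Fourier-side change of variables $\tau=i\omega(k)$ of \cite[Theorem 3]{amo2024}: global in time when $p$ is monotone (i.e.\ $\alpha^2+3\beta\delta<0$), and $T$-dependent otherwise because of the critical points of $p$, exactly as you do. The only difference is cosmetic: the paper obtains $b=1$ by applying the $b=0$ estimate to $y_x$ with data $y_0'\in H^{s-1}$, whereas you carry the factor $|k|^{2b}$ in the weight directly, which is the same computation.
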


\begin{proof}
    The proof in the case $b=0$ can be done by adapting the arguments in the proof of \cite[Theorem 3]{amo2024} to the case $\beta<0$. The case $b=1$ follows from differentiating \eqref{cauchy1} in $x$ and applying estimates of the case $b=0$ to $y_x$ while replacing $y_0$ with $y_0'$ and  $s$ with $s-1$.
\end{proof}

\begin{theorem}[Strichartz estimate]\label{homStr}
	Let $s\in\mathbb{R}$ and $(\mu,r)$ be higher-order Schr\"{o}dinger admissible, i.e. 	$\mu,r\geq 2,$ and $\displaystyle \frac{3}{\mu}+\frac{1}{r}=\frac12.$ Then, the solution of the homogeneous Cauchy problem \eqref{cauchy1} satisfies the Strichartz estimate
	\begin{equation}\label{stry}
		\left\|y\right\|_{L_{t}^{\mu}((0, T); H_{x}^{s,r}(\mathbb{R}))}\lesssim \left\|y_0\right\|_{H_{x}^{s}(\mathbb{R})},
	\end{equation}
    where $H^{s, r}(\mathbb{R})$ is the Bessel potential space with the norm 
\begin{equation}\label{bessel-def}
	\left\|f\right\|_{H^{s,r}(\mathbb{R})}
	:=
	\left\| \mathcal{F}^{-1}\left\{ \left(1+k^2\right)^{\frac{s}{2}} \mathcal F\{f\}(k)\right\}\right\|_{L^{r}(\mathbb{R})}.
\end{equation}
\end{theorem}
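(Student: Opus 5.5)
Since the operator $(1-\partial_x^2)^{s/2}$ commutes with the linear flow, the plan is to reduce to the case $s=0$. Setting $\tilde y_0=(1-\partial_x^2)^{s/2}y_0$, the solution of \eqref{cauchy1} with data $\tilde y_0$ is $(1-\partial_x^2)^{s/2}y$. So it suffices to prove
\[
\|y\|_{L^\mu_t((0,T);L^r_x(\mathbb{R}))}\lesssim\|y_0\|_{L^2_x(\mathbb{R})}
\]
for admissible pairs $(\mu,r)$, with a constant independent of $T$.

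Next I write $y(t)=U(t)y_0$ with
\[
\widehat{U(t)y_0}(k)=e^{-\omega(k)t}\hat y_0(k),\qquad -\omega(k)=i(\beta k^3-\alpha k^2-\delta k),
\]
which is a real phase for $k\in\mathbb{R}$. Completing the cube about $k_0=\frac{\alpha}{3\beta}$ gives
\[
\beta k^3-\alpha k^2-\delta k=\beta(k-k_0)^3+c_1(k-k_0)+c_0 .
\]
The substitution $k\mapsto k+k_0$ is a modulation $e^{ik_0x}$, which leaves $L^r$ norms unchanged. The linear term $c_1(k-k_0)$ is a translation in $x$ by a multiple of $t$, which also leaves $L^r_x$ norms unchanged for each fixed $t$. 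The constant $c_0$ contributes a unimodular factor. After these reductions the problem is the Airy-type group $e^{-t\beta\partial_x^3}$.

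The central step is the dispersive estimate
\[
\|U(t)\phi\|_{L^\infty}\lesssim|t|^{-1/3}\|\phi\|_{L^1}.
\]
It follows from the oscillatory integral bound
\[
\sup_x\Bigl|\int e^{i(xk+t\beta k^3)}dk\Bigr|\lesssim|t\beta|^{-1/3},
\]
which holds by scaling $k\mapsto|t\beta|^{-1/3}k$ together with the boundedness of the Airy function, or equivalently by van der Corput with the third derivative. I regard this step as the main point, but it is classical. Interpolating with $L^2$ unitarity gives
\[
\|U(t)\phi\|_{L^{r}}\lesssim|t|^{-\frac13(1-\frac2r)}\|\phi\|_{L^{r'}} .
\]

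Finally, I apply the standard $TT^*$ argument of Keel--Tao with decay exponent $\sigma=1/3$. Since $\sigma<1$, there is no endpoint issue, and Hardy--Littlewood--Sobolev in time suffices. The exponent condition $\frac{2}{\mu}=\sigma\bigl(1-\frac2r\bigr)$ is exactly $\frac{3}{\mu}+\frac{1}{r}=\frac12$. This yields $\|U(t)y_0\|_{L^\mu_tL^r_x(\mathbb{R}\times\mathbb{R})}\lesssim\|y_0\|_{L^2}$, and restricting the time integral to $(0,T)$ gives \eqref{stry}. For $\mu=\infty$, $r=2$ the estimate is simply the conservation law \eqref{cuachyspaceest}. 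One could also cite the known Strichartz estimates for Airy/KdV-type groups, as in \cite{amo2024}, where the case $\beta>0$ is identical up to the sign of the phase.
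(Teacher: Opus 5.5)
Your proposal is correct and follows essentially the same route as the paper. The paper gives no self-contained proof: it defers to the $\beta>0$ case in \cite[Theorem 4]{amo2024}, which rests on the same scheme you describe, namely reduction to $s=0$ via the commuting Bessel potential, the $|t|^{-1/3}$ dispersive bound for the cubic phase, and the $TT^*$/Hardy--Littlewood--Sobolev argument with $\sigma=\tfrac13$; you also correctly note that the sign of $\beta$ is irrelevant, since only $|\partial_k^3(\text{phase})|=6|\beta t|$ enters the oscillatory-integral bound.
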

\begin{proof}
The proof of this theorem is identical to the proof of the Strichartz estimates in the case $\beta>0$ (see \cite[Theorem 4]{amo2024}).
\end{proof}

\subsection{Non-homogeneous Cauchy problem}
Consider the initial value problem 
\begin{eqnarray}\label{cauchy2}
	\begin{array}{l}
		iz_t+i\beta z_{xxx}+\alpha z_{xx}+i\delta z_x = F,  \quad (x,t)\in\mathbb{R} \times (0,T),\\
		z(x,0) = 0, \quad x\in \mathbb{R},
	\end{array}
\end{eqnarray}
where $\alpha, \delta\in\mathbb{R}$, $\beta<0$ (or more generally $\beta\neq0$), and $F\in L^2_t((0,T);H^s_x(\mathbb{R}))$.  Thanks to Duhamel's principle, the solution of the Cauchy problem \eqref{cauchy2}, denoted by $S[0;F]$, can be expressed as
\begin{equation}\label{duhamel}
	\begin{aligned}
		z(x,t)=S[0;F](x,t)&=-i\int_{0}^{t} S[F(\cdot,t');0](x,t-t')dt'
		\\
		&=-\frac{i}{2\pi}\int_{0}^{t}\int_{\mathbb{R}} e^{ikx-\omega(k)(t-t')}\widehat{F}(k,t') dkdt',
	\end{aligned}
\end{equation}
where for each $t'\in[0, t]$, $S[F(\cdot,t');0]$ denotes the solution to the homogeneous Cauchy problem \eqref{cauchy1} with initial data $F(x, t')$.  We have the following theorems whose proofs are identical to proofs of \cite[Theorem 5, Theorem 6]{amo2024} except for the case $b=1$ which can be established by differentiating \eqref{cauchy2} in $x$ and applying estimates of the case $b=0$ to $z_x$ while replacing $F$ with $F_x$ and  $s$ with $s-1$.
\begin{theorem}[Sobolev estimates]\label{Nonhomthm}Let $b\in \{0,1\}$. Then, the unique solution of \eqref{cauchy2} satisfies the space estimate
	\begin{equation}\label{nonhthm1}
		\sup_{t\in[0,T]}\left\| z(\cdot,t) \right\|_{H_x^{s}(\mathbb{R})}\leq \left\|F\right\|_{L_t^1((0,T); H_x^{s}(\mathbb{R}))}, \quad s\in \mathbb{R}.
	\end{equation}
	Moreover, if  $-1+b\leq s \leq 2+b$ with $s\neq \frac 12+b$, then
	the following time estimate holds
\begin{equation}\label{nonhthm2}
\sup_{x\in\mathbb{R}}\left\| \partial_x^bz(x,\cdot) \right\|_{H_t^{\frac{s+1-b}{3}}(0,T)}\leq c \, 
\|F\|_{L_t^2((0,T); H_x^{s}(\mathbb{R}))}
	\end{equation}
	for some constant
    $c = c(s, T, \alpha, \beta, \delta) \geq 0$.
\end{theorem}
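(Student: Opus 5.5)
The plan is to prove both estimates in Theorem~\ref{Nonhomthm} by working directly with the Duhamel formula \eqref{duhamel}. The case $b=0$ carries the substance. The case $b=1$ then follows by differentiation: $z_x$ solves \eqref{cauchy2} with forcing $F_x$, and $\|F_x\|_{L^2_tH^{s-1}_x}\le\|F\|_{L^2_tH^s_x}$. So the $b=1$ time estimate is the $b=0$ estimate applied at regularity $s-1$, and this accounts for the shifted range $-1+b\le s\le 2+b$, $s\ne\frac12+b$.

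For the space estimate \eqref{nonhthm1}, I apply Minkowski's inequality in $t'$ to the first line of \eqref{duhamel}. Combining it with the $H^s$ conservation law \eqref{cuachyspaceest} for each $S[F(\cdot,t');0]$ gives
\[
\|z(\cdot,t)\|_{H^s_x}\le\int_0^t\|F(\cdot,t')\|_{H^s_x}\,dt'
\]
directly.

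For the time estimate with $b=0$, I fix $x$ and regard $z(x,\cdot)$ as a function of $t$. I extend $F$ by zero outside $(0,T)$ and multiply by a smooth cutoff $\theta(t)$ equal to $1$ on $[0,T]$. The Fourier transform in $t$ of $\theta(t)\,z(x,t)$ can then be computed explicitly: one obtains $\int_{\mathbb R}e^{ikx}\,\frac{\hat F(k,\tau)-\hat F(k,\,i\omega(k))\cdot(\text{cutoff kernel})}{\tau-i\omega(k)}\,dk$, up to smooth errors, where $i\omega(k)=p(k)$ is the real cubic symbol. This follows the scheme of \cite{fhm2017,fhm2016} and of the $\beta>0$ proof of \cite[Theorem 5]{amo2024}. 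I then split the $k$-integral into three pieces.

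The first piece is a neighborhood of the critical points of $p$, which exist when $\alpha^2+3\beta\delta>0$. There $|p'|$ is small, but the region is bounded, so the contribution is controlled by $\sqrt T\,\|F\|_{L^2_tL^2_x}$. The second piece is the high-frequency region $|k|\gg1$. There I change variables $\tau\mapsto p(k)$, which is monotone on each branch. Using $|p'(k)|\sim k^2\sim|\tau|^{2/3}$ in the Plancherel step, this converts $(1+\tau^2)^{\frac{s+1}{6}}$ weights into $(1+k^2)^{s/2}$ weights with exactly the one-third gain needed. The third piece is the singular denominator $\tau-i\omega(k)$. I handle it through a Hilbert-transform/principal-value estimate in $\tau$, using Cauchy--Schwarz in $k$ after multiplying and dividing by $(1+k^2)^{s/2}$. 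This yields a bound by $\|F\|_{L^2_tH^s_x}$ uniformly in $x$. For integer $s$ it is convenient to first treat $s=0,1,2$ (for $s=1,2$, replacing time derivatives by $x$-derivatives through the equation), and then interpolate. The negative end $s=-1$ is handled by duality or by the same multiplier bound with negative weight.

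I expect the main obstacle to be the time-localized Sobolev norm for fractional orders near $s=\frac12$, i.e. time exponent $\frac12$. There the cutoff and zero-extension of $z(x,\cdot)$ at $t=0$ fail to be bounded on $H^{\frac12}_t$, which is why $s=\frac12+b$ is excluded. The range restriction $s\le2+b$ likewise reflects that higher time regularity would require compatibility of $F$ at $t=0$. I would handle this exactly as in \cite[Theorem 5]{amo2024}. Since $\beta$ enters only through the sign of $p$ and through $|p'|\sim3|\beta|k^2$, I expect all the bounds there to carry over verbatim to $\beta<0$. The constant depends on $T$ through the cutoff and the bounded critical-point region.
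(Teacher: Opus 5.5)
Your proposal matches the paper's proof. The paper likewise takes the $b=0$ estimates over from \cite[Theorem 5]{amo2024}; your Fourier-in-time sketch of the time estimate is only heuristic, but, as in the paper, the citation is what carries that step. The $b=1$ case is then obtained, exactly as you describe, by applying the $b=0$ estimate at regularity $s-1$ to $z_x$, which solves \eqref{cauchy2} with forcing $F_x$.

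Your expectation that the $\beta>0$ bounds carry over verbatim can be justified in one line. The reflection $x\mapsto -x$ turns the equation with parameters $(\beta,\alpha,\delta)$ into the one with $(-\beta,\alpha,-\delta)$, and it leaves $\alpha^2+3\beta\delta$ and all the whole-line norms involved unchanged.
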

\begin{theorem}[Strichartz estimate]\label{NonHomStrThm}
	Let $s\in\mathbb{R}$ and $(\mu,r)$ be higher-order Schrödinger admissible. Then, the solution of the nonhomogeneous linear Cauchy problem \eqref{cauchy2} satisfies the Strichartz estimate
	\begin{equation}
		\left\|z\right\|_{L^\mu_t((0, T); H^{s,r}_x(\mathbb{R}))}\lesssim \|F\|_{L_t^1((0,T); H^s_x(\mathbb{R}))}.
	\end{equation}
\end{theorem}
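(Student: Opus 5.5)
The estimate follows from the Duhamel representation \eqref{duhamel}, which writes $z$ as a time-integrated superposition of homogeneous flows. For each fixed $t'\in(0,T)$, set $y^{t'}(x,\tau):=S[F(\cdot,t');0](x,\tau)$, the solution of \eqref{cauchy1} with data $F(\cdot,t')$. Then
$$z(\cdot,t)=-i\int_0^t y^{t'}(\cdot,t-t')\,dt' = -i\int_0^T \chi_{\{t'<t\}}\, y^{t'}(\cdot,t-t')\,dt'.$$
Taking the $L^\mu_t((0,T);H^{s,r}_x(\mathbb{R}))$ norm and applying Minkowski's integral inequality gives
$$\|z\|_{L^\mu_t((0,T);H^{s,r}_x)}\le \int_0^T \big\| \chi_{\{t>t'\}}\,y^{t'}(\cdot,t-t')\big\|_{L^\mu_t((0,T);H^{s,r}_x)}\,dt'.$$
This step is legitimate because $\mu\ge 2\ge1$ and $H^{s,r}_x$ is a Banach space.

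For fixed $t'$, the substitution $\tau=t-t'$ bounds the inner norm by $\|y^{t'}\|_{L^\mu_\tau((0,T);H^{s,r}_x)}$, since $\tau$ ranges over $(0,T-t')\subset(0,T)$. Theorem \ref{homStr} then bounds this by $\lesssim\|F(\cdot,t')\|_{H^s_x(\mathbb{R})}$, with an implicit constant that is uniform in $t'$ and independent of $T$. Integrating over $t'\in(0,T)$ yields $\|F\|_{L^1_t((0,T);H^s_x)}$, which is the claimed estimate.

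The only point that needs care is justifying the Duhamel formula and the measurability needed for Minkowski's inequality. The plan is to prove the estimate first for $F\in C_c^\infty$ in $(x,t)$, where $\widehat F$ is smooth, \eqref{duhamel} holds classically, and the Bochner integral makes sense. The general case $F\in L^1_t H^s_x$ then follows by density, using the bound just obtained together with the energy estimate \eqref{nonhthm1} to identify the limit with $S[0;F]$. Because the homogeneous Strichartz estimate is already available, no $TT^*$ or Christ–Kiselev argument is needed: the $L^1_t$ norm on the right-hand side is exactly what Minkowski's inequality produces.
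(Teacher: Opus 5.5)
Your proposal is correct and follows the same route as the paper. The paper simply cites \cite[Theorem 6]{amo2024} for this result, and your reduction to Theorem \ref{homStr} via the Duhamel formula \eqref{duhamel}, Minkowski's integral inequality in $t'$, and the shift to a subinterval of $(0,T)$ is exactly the argument the paper writes out for the analogous half-line Duhamel term in the proof of Theorem \ref{LowRegThem}; your density step just adds rigor the paper leaves implicit.
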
 

\section{Strichartz estimates on the half-line}\label{str-s}

We again consider the initial-boundary value problem \eqref{HNLSreduced} whose solution is given by the formula \eqref{sol_for_reduced}.  We establish the following theorem.

\begin{theorem}[Strichartz estimates]
\label{reduced-s}
Let $s\ge 0$, $(\mu,r)$ be higher-order Schrödinger admissible, and $\psi_0\in H_t^{\frac{s+1}{3}}(\mathbb{R})$, $\psi_1\in H_t^{\frac{s}{3}}(\mathbb{R})$ satisfy \eqref{supp-cond}.  Then, the solution of \eqref{HNLSreduced} represented by the formula \eqref{sol_for_reduced} satisfies 
\begin{equation}
    \left\|v\right\|_{L^\mu_t((0, T); H^{s,r}_x(\mathbb{R}_+))}\lesssim (1+e^{cT'}(T')^{\frac12+\frac 1 \mu})\sum_{b=0}^1\|\psi_b\|_{H_t^{\frac{s+1-b}{3}}},
\end{equation} 
where $c$ and the constant of the inequality may depend only on the fixed parameters such as $s,\alpha,\beta$, and $\delta$.
    
\end{theorem}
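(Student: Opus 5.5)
We split $v=\sum_{n=1}^6\sum_{b=0}^1 v_{n,b}$ as in \eqref{qnr1}--\eqref{qnr2} and treat the three groups of pieces separately, exactly as in the proof of Theorem~\ref{reduced}.

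\textbf{Real-line pieces $v_{1,b}$, $v_{6,b}$.} We write $v_{1,b}=V_{1,b}|_{\mathbb{R}_+}$, where
\[
V_{1,b}(x,t)=\frac1{2\pi}\int_{\mathbb{R}}e^{imx-\omega(m)t}\,\hat y_{0,b}(m)\,dm,\qquad \hat y_{0,b}(m):=\phi_b(m)\mathcal{F}[\psi_b](i\omega(m))\chi_{m\le c_1}(m).
\]
For real $m$ we have $\text{Re}\,\omega(m)=0$, so $V_{1,b}=S[y_{0,b};0]$ is the solution of the homogeneous Cauchy problem \eqref{cauchy1} with datum $y_{0,b}=\mathcal{F}^{-1}[\hat y_{0,b}]$. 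Theorem~\ref{homStr} then gives
\[
\|v_{1,b}\|_{L^\mu_t(0,T;H^{s,r}_x(\mathbb{R}_+))}\le\|V_{1,b}\|_{L^\mu_t(0,T;H^{s,r}_x(\mathbb{R}))}\lesssim\|y_{0,b}\|_{H^s_x(\mathbb{R})}.
\]
The computation \eqref{asymptotic}--\eqref{v1bpsib} in Section~\ref{red-s} is precisely the bound $\|y_{0,b}\|_{H^s(\mathbb{R})}\lesssim\|\psi_b\|_{H^{(s+1-b)/3}_t}$. The piece $v_{6,b}$ is handled identically. The restriction-norm inequality in the first display holds because $H^{s,r}(\mathbb{R}_+)$ is defined by restriction.

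\textbf{Compact pieces $v_{2,b}$, $v_{5,b}$.} These occur only when $\alpha^2+3\beta\delta\ge0$. By Sobolev embedding on the half-line, $H^{\lceil s\rceil+1}_x(\mathbb{R}_+)\hookrightarrow H^{s,r}_x(\mathbb{R}_+)$ for $r\ge2$. Alternatively, one can estimate $\|\partial_x^j v_{2,b}\|_{L^r_x(\mathbb{R}_+)}$ directly: Minkowski in $m$ together with $\|e^{-mx}\|_{L^r_x}\sim m^{-1/r}$ gives an integrable weight on $(0,\lambda)$ since $r\ge2$, and the integrand is controlled by $e^{cT'}\sqrt{T'}\|\psi_b\|_{L^2}$. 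This yields a bound uniform in $t\in[0,T']$:
\[
\|v_{2,b}(t)\|_{H^{s,r}_x(\mathbb{R}_+)}\lesssim e^{cT'}\sqrt{T'}\,\|\psi_b\|_{L^2_t}.
\]
Taking the $L^\mu$ norm over a time interval of length $\lesssim T'$ produces the factor $(T')^{1/\mu}$, hence the stated $e^{cT'}(T')^{\frac12+\frac1\mu}$. The restriction to $t\le T'$ is harmless; beyond $T'$, use that $\text{Re}(-\omega)$ is bounded on $\gamma_2$ and adjust the constant accordingly.

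\textbf{Curved pieces $v_{3,b}$, $v_{4,b}$ (main obstacle).} Here $x\mapsto e^{i\gamma_3(m)x}$ decays like $e^{-mx}$ but is not a Fourier multiplier, so the Cauchy Strichartz estimate does not apply directly. The plan follows \cite{amo2024} and the method of \cite{fhm2017}.

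\emph{Step 1.} Change variables $\tau=i\omega(\gamma_3(m))\in\mathbb{R}$, which is possible since $\text{Re}\,\omega=0$ on $\gamma_3$, to write
\[
v_{3,b}(x,t)=\int e^{i\gamma_3(m_\tau)x-i\tau t}\,h_b(\tau)\,d\tau,\qquad h_b(\tau)=\phi_b\,\gamma_3'\,\mathcal{F}[\psi_b](\tau)/|p'(m_\tau)|.
\]

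\emph{Step 2.} Estimate the $L^r_x$ norm by splitting $e^{i\gamma_3 x}=e^{i\,\text{Re}\gamma_3\,x}e^{-mx}$. Either obtain a pointwise dispersive/oscillatory-integral bound in $t$ via van der Corput using the phase $\tau t-\text{Re}\,\gamma_3(m_\tau)x$ and interpolate with the $L^2$ bound, or, preferably, use a Laplace-type kernel bound. For the latter, the operator
\[
h\mapsto\int e^{i\gamma_3(m_\tau)x}h(\tau)\,d\tau
\]
maps $L^2_\tau$ with weight $(1+\tau^2)^{(s+1-b)/3}$ into $L^\infty_tL^2_x$ (Section~\ref{red-s}), and into $L^\infty_xL^2_t$ by Plancherel in $\tau$ since $|e^{i\gamma_3 x}|\le1$.

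\emph{Step 3.} Interpolate (Riesz--Thorin/Christ--Kiselev style) between these bounds and a $t$-decay estimate of the form
\[
|\,\cdot\,|\lesssim t^{-1/3}\|\hat\cdot\|_{L^1}
\]
via a $TT^*$ argument. This requires a decay estimate for the kernel $\int e^{i(\gamma_3(m)-\overline{\gamma_3(m')})\cdots}$; the analytic-continuation ``$e^{-mx}$'' factor only helps, and the stationary phase bound $|p''|\gtrsim m$ gives the $t^{-1/3}$ decay matching the admissibility condition $3/\mu+1/r=1/2$.

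\emph{Step 4.} Derivatives $\partial_x^j$ produce $\gamma_3^j$, which are absorbed exactly as in \eqref{asymptotic}. Then interpolate in $s$.

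This $TT^*$/decay step for the complex contour is where the real difficulty lies.
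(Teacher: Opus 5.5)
Your handling of $v_{1,b},v_{6,b}$ (extension to a Cauchy solution, Theorem~\ref{homStr}, and the change of variables behind \eqref{v1bpsib}) is what the paper does. So is your handling of $v_{2,b},v_{5,b}$: the bound \eqref{q20est} with a large $s'$, the embedding $H^{s'}\hookrightarrow H^{s,r}$, then integration in time.

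The gap is in $v_{3,b},v_{4,b}$. This is the real content of the theorem, and you leave it as a menu of options. Step 3 as written cannot work. Interpolating an $L^\infty_tL^2_x$ bound, an $L^\infty_xL^2_t$ bound and a fixed-time $L^1\to L^\infty$ decay bound does not give $L^2\to L^\mu_tL^r_x$ with $\frac3\mu+\frac1r=\frac12$. What is needed is a $TT^*$ argument. Its kernel is an integral over a \emph{single} spectral variable (not $m$ and $m'$), and it must be $O(|t-t'|^{-1/3})$ uniformly in $x,x'\ge0$.

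In your variables this bound is not available. In the $\tau$-variable of Step~1, the time phase $\tau(t-t')$ is linear. All curvature sits in the $x$-dependent term $\text{Re}\,\gamma_3(m_\tau)(x-x')$, so van der Corput in $\tau$ cannot produce $t$-decay uniformly in $x,x'$. In the $\text{Im}\,k$-parametrization $\gamma_3$, the phase $\text{Re}\,\gamma_3(m)(x-x')+p(m)(t-t')$ has $x$-dependent second and third $m$-derivatives that can cancel the curvature coming from $t$. Hence ``$|p''|\gtrsim m$'' alone does not yield a bound uniform in $x,x'$.

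The missing idea is the reparametrization \eqref{paramet2_new} of the curved branches by $\text{Re}\,k$; this is exactly why the paper introduces it. With it, $e^{i\tilde\gamma_3(m)x}=e^{imx}p(m;x)$:
\begin{itemize}
\item The whole $x$-dependence of the phase is the linear term $mx$.
\item The amplitude $p(m;x)=e^{-x\,\text{Im}\,\tilde\gamma_3(m)}\in(0,1]$ is monotone in $m$ on $(-\infty,c_1]$, because $c_1\le\frac{\alpha}{3\beta}$.
\item On $\partial D^+$ one computes $\tau(m):=i\omega(\tilde\gamma_3(m))=-8\beta X^3+\frac{2(\alpha^2+3\beta\delta)}{3\beta}X+\text{const}$ with $X=m-\frac{\alpha}{3\beta}$. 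This is a real cubic with $\tau'''=-48\beta\neq0$.
\end{itemize}
Hence $v_{3,b}=\frac1{2\pi}\int\mathcal K(y;x,t)\Psi(y)\,dy$, with $\Psi=\mathcal F^{-1}[\Psi_b]$ and $\|\Psi\|_{H^s}\lesssim\|\psi_b\|_{H^{(s+1-b)/3}_t}$. This is a Cauchy-type propagator carrying an amplitude of total variation at most one.

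Van der Corput with amplitude then bounds the $TT^*$ kernel $\int_{-\infty}^{c_1}e^{im(x-x')+i\tau(m)(t-t')}p(m;x)p(m;x')\,dm$ by $c|t-t'|^{-1/3}$, uniformly in $x,x'\ge0$. From there, the argument of \cite[Theorem 8]{amo2024}, which the paper invokes, gives $\|v_{3,b}\|_{L^\mu_tH^{s,r}_x(\mathbb{R}_+)}\lesssim\|\Psi\|_{H^s}$. Derivatives are handled through the factors $\tilde\gamma_3(m)^j$, as in your Step 4.
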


\subsection*{Reparametrization of $\Gamma$}
For the proof of Strichartz estimates, we first switch to a more convenient parametrization of $\Gamma$. To this end, we replace $\gamma_3$ and $\gamma_4$ in \eqref{paramet1}-\eqref{paramet2} with $\tilde{\gamma}_3$ and $\tilde{\gamma}_4$, respectively, where
\begin{equation}\label{paramet2_new}
	\begin{aligned}
		&\tilde{\gamma}_3(m)=m+i\sqrt{3\left(m-\frac{\alpha}{3\beta}\right)^2-\frac{\alpha^2+3\beta\delta}{3\beta^2}},\qquad -\infty< m\le c_1;
		\\
		&\tilde{\gamma}_4(m)=m+i\sqrt{3\left(m-\frac{\alpha}{3\beta}\right)^2-\frac{\alpha^2+3\beta\delta}{3\beta^2}},\qquad c_2\leq m<\infty.
	\end{aligned}
\end{equation}
Also, the intervals $I_3$ and $I_4$ are respectively replaced with $\tilde{I}_3$ and $\tilde{I}_4$, where $(-\tilde{I}_3)=(-\infty,c_1]$ and $(-\tilde{I_4})=[c_2,\infty)$ in \eqref{qnr2} and \eqref{qnr4}.
\subsection*{Strichartz estimate for $v_{1,b}$ and $v_{6,b}$, $b=0,1$}
We provide details only for $v_{1,b}$ because $v_{6,b}$ can be handled similarly.  Using \eqref{qnr2}, we have 
\begin{equation}\label{v1b}
	v_{1,b}(x,t)=\frac{1}{2\pi}\int_{-\infty}^{c_1}e^{imx-w(m)t}\phi_b(m)\mathcal{F}[\psi_b](i\omega(m))dm,\quad x\in \mathbb{R}_+.
\end{equation}
The above formula extends to all $x\in \mathbb{R}$ and we have $v_{1,b}=V_{1,b}|_{\mathbb{R}_+}$, where  $$V_{1,b}(x,t):= \mathcal{F}^{-1}[e^{-w(m)t}\phi_b(m)\mathcal{F}[\psi_b](i\omega(m))\chi_{m\le c_1}](x).$$ We notice that $V_{1,b}$ solves the initial-value problem below:
 \begin{eqnarray}\label{V1bcauchy1}
 	\begin{array}{l}
 		i\partial_tV_{1,b}+i\beta \partial_x^3V_{1,b}+\alpha\partial_x^2 V_{1,b}+i\delta\partial_xV_{1,b} = 0,  \qquad x,t\in\mathbb{R},\\
 		V_{1,b}(x,0) = V_{1,b}^0(x), \qquad x\in \mathbb{R},
 	\end{array}
 \end{eqnarray}	where $V_{1,b}^0(x)=\mathcal{F}^{-1}[\phi_b(m)\mathcal{F}[\psi_b](i\omega(m))\chi_{m\le c_1}](x)$. Therefore,  by Theorem \ref{homStr}, 
 \begin{equation}\label{V1bstry}
		\left\|v_{1,b}\right\|_{L_{t}^{\mu}((0, T); H_{x}^{s,r}(\mathbb{R}_+))}\le\left\|V_{1,b}\right\|_{L_{t}^{\mu}((0, T); H_{x}^{s,r}(\mathbb{R}))}\lesssim \left\|V_{1,b}^0\right\|_{H_{x}^{s}(\mathbb{R})} \lesssim \|\psi_b\|_{H_t^{\frac{s+1-b}{3}}(\mathbb{R})}
	\end{equation} for admissible $(\mu,r)$, where the last inequality is due to an argument similar to the one used in \eqref{v1bpsib}.
    
\subsection*{Strichartz estimate for $v_{2,b}$ and $v_{5,b}$, $b=0,1$}

Here, we consider $v_{2,b}$ ($v_{5,b}$ can be treated analogously). Also we only consider the case   $\alpha^2+3\beta\delta\ge 0$ since otherwise $v_{2,b}=0$.  In \eqref{q20est}, we have already proved the following for any $s'\ge 0$:
\begin{equation}
	\|v_{2,b}(\cdot,t)\|_{H^{s'}_x(\mathbb{R}_+)}\lesssim e^{cT'}\sqrt{T'}\|\psi_b\|_{L_t^2(0,T')},
\end{equation} where the constant of the estimate depends on $s'$.  We choose $s'$ large enough that $H^{s'}(\mathbb{R}_+)\hookrightarrow H^{s,r}(\mathbb{R_+})$, e.g., $s'=s+\frac12-\frac1r$. This follows from 
\[
H^{s'}(\mathbb{R}_+)
\xrightarrow{\,E\,}
H^{s'}(\mathbb{R})
\hookrightarrow
H^s_r(\mathbb{R})
\xrightarrow{\,R\,}
H^s_r(\mathbb{R}_+),
\] where $E$ and $R$ are the bounded extension and restriction operators respectively, and the continuous embedding in the middle is well known, see e.g. \cite[Theorem 2.7 (ii)]{Tr83}. Thus, we have 
\begin{equation}
\begin{split}
\|v_{2,b}(\cdot,t)\|_{L^\mu(0,T';H^{s,r}_x(\mathbb{R}_+))}&=\left(\int_0^{T'}\|v_{2,b}(\cdot,t)\|_{H^{s,r}_x(\mathbb{R}_+)}^\mu dt\right)^{1/\mu}\\ &\lesssim 
	\left(\int_0^{T'}\|v_{2,b}(\cdot,t)\|_{H^{s'}_x(\mathbb{R}_+)}^\mu dt\right)^{1/\mu}\lesssim e^{cT'}(T')^{\frac12+\frac1\mu}\|\psi_b\|_{L_t^2(0,T')}.
    \end{split}
\end{equation} 

\subsection*{Strichartz estimate for $v_{3,b}$ and $v_{4,b}$, $b=0,1$} 
Finally, we consider $v_{3,b}$ ($v_{4,b}$ is treated in a similar way). To this end, we rewrite $v_{3,b}$ in \eqref{qnr2} with respect to the parametrization \eqref{paramet2_new} as follows:
\begin{equation}\label{v3b}
	v_{3,b}(x,t)=-\frac{1}{2\pi}\int_{-\infty}^{c_1} e^{i\tilde{\gamma}_3(m)x-w(\tilde{\gamma}_3(m))t}\phi_b(\tilde{\gamma}_3(m))\mathcal{F}[\psi_b](i\omega(\tilde{\gamma}_3(m)))\tilde{\gamma}_3'(m)dm.
\end{equation}
Setting $\Psi_b(m):=-\phi_b(\tilde{\gamma}_3(m))\mathcal{F}[\psi_b](i\omega(\tilde{\gamma}_3(m)))\tilde{\gamma}_3'(m)\chi_{m\le c_1}$, we can rewrite $v_{3,b}$ in the form
\begin{equation}\label{v3b}
	v_{3,b}(x,t)=\frac{1}{2\pi}\int_{-\infty}^{\infty}\mathcal{K(y;x,t)}\Psi(y)dy,
\end{equation} where $\Psi(y)=\mathcal{F}^{-1}[\Psi_b](y)$ and $\mathcal{K}(y;x,t)=\int_{-\infty}^{c_1}e^{i\phi(m;x,y,t)}p(m;x)dm$ with $\phi(m;x,y,t)=m(x-y)+i\omega(\tilde{\gamma}_3(m))t$ and $p(m;x)=e^{-x\sqrt{3\left(m-\frac{\alpha}{3\beta}\right)^2-\frac{\alpha^2+3\beta\delta}{3\beta^2}}}.$ Using exactly the same strategy of \cite[Theorem 8]{amo2024}, one finds that for $s\ge 0$, $$\|v_{3,b}\|_{L^\mu(0,T';H_x^{s,r}(\mathbb{R_+}))}\lesssim \|\Psi\|_{H^s(\mathbb{R})}\lesssim \|\psi_b\|_{H_t^{\frac{s+1-b}{3}}(\mathbb{R})}.$$

\section{Local well-posedness of the nonlinear problem}\label{Sec_contraction}

\subsection{High-regularity setting - Proof of Theorem \ref{HighRegThem}}
Let $s\in (\frac12,2]$, $s\neq \frac32$, and set $X_T^s=C([0,T];H^s_x(\mathbb{R}_+))$.  We define the (solution) map \begin{equation}\label{PhiDef}
    \Phi(u):=y|_{Q_T}+z^u|_{Q_T}+v^{u}|_{(0,T)}, \quad u\in X_T^s, \quad Q_T=\mathbb{R}_+\times (0,T),
\end{equation}
where $y$ solves \eqref{cauchy1} with initial datum $y_0=\tilde{u}_0$, $\tilde{u}_0$ being a spatial extension of $u_0$ with $\|\tilde{u}_0\|_{H^s_x(\mathbb{R})}\lesssim\|u_0\|_{H^s_x(\mathbb{R}_+)}$ and given $u\in X_T^s$, $z^u$ solves \eqref{cauchy2} with $F=\kappa |\tilde{u}|^{\lambda-1}{\tilde{u}}$,  $\tilde{u}$ being a spatial extension of $u$ with $\|\tilde{u}(t)\|_{H^s_x(\mathbb{R})}\lesssim \|{u}(t)\|_{H^s_x(\mathbb{R}_+)}$ for each $t\in (0,T)$, and $v^u$ solves \eqref{HNLSreduced} for some $T'>T$ with $\psi_0$ and $\psi_1$ being temporal extensions of $g_0-y|_{x=0}-z^u|_{x=0}$ and $g_1-y_x|_{x=0}-z^u_x|_{x=0}$ supported on $[0,T')$, respectively through a continuous extension so that $\|\psi_0\|_{H_t^{\frac{s+1}{3}}(\mathbb{R})}\lesssim \|g_0-y|_{x=0}-z^u|_{x=0}\|_{H_t^{\frac{s+1}{3}}(0,T)}$ and $\|\psi_1\|_{H_t^{\frac{s}{3}}(\mathbb{R})}\lesssim \|g_1-y_x|_{x=0}-z^u_x|_{x=0}\|_{H_t^{\frac{s}{3}}(0,T)}$.  Here, we assume $u_0(0)=g_0(0)$ and in addition if $s\in (3/2,2]$, we further assume $u_0'(0)=g_1(0)$. The aforementioned extensions $\psi_0,\psi_1$ exist thanks to these compatibility conditions.  

Using Theorem \ref{cauchylemma} and due to the choice of $y_0$, we have
$
    \|y|_{Q_T}\|_{X_T^s} \lesssim \|u_0\|_{H^s_x(\mathbb{R}_+)}.
$
Next, Theorem \ref{Nonhomthm}, the choice of $\tilde{u}$ and the algebra property of $H^s_x(\mathbb{R})$ yield
$
    \|z^u|_{Q_T}\|_{X_T^s} \lesssim T \|u\|_{X_T^s}^{\lambda}$.  Moreover, for $T'=T+1$ (which ensures that the constants involved in the various linear estimates remain bounded as $T\to 0^+$), using {Theorem \ref{reduced} along with} the trace estimates of Theorem \ref{cauchy1} and Theorem \ref{Nonhomthm} where, in the latter case, $T$ is replaced by $T'$ and $F$ is replaced by $\widetilde F$ such that $\widetilde F = F$ on $(0, T)$ and $\widetilde F=0$ on $(T, T'$), 
    we find via the algebra property of $H^s_x(\mathbb{R})$ that
\begin{equation}
\begin{split}
    \|v^u|_{(0,T)}\|_{X_T^s}&\le  \|v^u\|_{X_{T'}^s}\leq c_T \left(\|\psi_0\|_{H_t^{\frac{s+1}{3}}(\mathbb{R})}+\|\psi_1\|_{H_t^{\frac{s}{3}}(\mathbb{R})}\right)\\
    &\leq c_T \left(\|g_0\|_{H_t^{\frac{s+1}{3}}(0,T)}+\|g_1\|_{H_t^{\frac{s}{3}}(0,T)}+\|u_0\|_{H^s_x(\mathbb{R}_+)}+ \sqrt{T} 
    \|u\|_{X_T^s}^\lambda\right).
    \end{split}
\end{equation}
Importantly, the constant $c_T$ remains bounded as $T\to0^+$.
Combining the estimates above, we obtain
\begin{equation}\label{Phiinvariance}
    \|\Phi(u)\|_{X_T^s}\le c_1(T)\|u_0\|_{H^s_x(\mathbb{R}_+)} +c_2(T)\left(\|g_0\|_{H_t^{\frac{s+1}{3}}(0,T)}+\|g_1\|_{H_t^{\frac{s}{3}}(0,T)}\right) +c_3(T)\|u\|_{X_T^s}^\lambda,
\end{equation} for some nonnegative constants $c_i(T)$, $i=1,2,3$ that depend on $T$ and fixed parameters of the problem such as $\alpha,\beta,\delta,\kappa, s$, where $c_1(T)$ and $c_2(T)$ remain bounded and $c_3(T)\rightarrow 0^+$ as $T\rightarrow 0^+$. Assume without loss of generality that either $u_0$ or else at least one of the $g_i$'s is nonzero (otherwise the zero solution satisfies the nonlinear ibvp). Set \begin{equation}\label{defrT}R_T=2\left(c_1(T)\|u_0\|_{H^s_x(\mathbb{R}_+)} +c_2(T)\left(\|g_0\|_{H_t^{\frac{s+1}{3}}(0,T)}+\|g_1\|_{H_t^{\frac{s}{3}}(0,T)}\right)\right).\end{equation}  Then, for any $u\in \overline{B_{R_T}(0)}\subset X_T^s$, \begin{equation}\label{c3RT}
     \|\Phi(u)\|_{X_T^s} \le \frac{1}{2}R_T +c_3(T)R_T^\lambda\le R_T
\end{equation} provided $T$ is small enough. That is, $\Phi$ is invariant on $\overline{B_{R_T}(0)}$. Using \cite[Lemma 5]{BO16} and linear estimates, for the differences, we have
\begin{equation}\label{clambdaTRT}
     \|\Phi(u_1)-\Phi(u_2)\|_{X_T^s} \le c(T,\lambda)R_T^{\lambda-1}\|u_1-u_2\|_{X_T^s},
\end{equation} where $c(T,\lambda)\rightarrow 0^+$ as $T\rightarrow 0^+$ and it may depend on also other fixed parameters of the model. It follows that $\Phi$ is a contraction on $\overline{B_{R_T}(0)}$ for small enough $T$, therefore has a unique fixed point $u\in \overline{B_{R_T}(0)}$, establishing local existence of a solution $u\in X_T^s$.  

Uniqueness in $X_T^s$ (not only in $\overline{B_{R_T}(0)}$) can be obtained through energy method. Assuming existence of two solutions $u_1,u_2\in X_T^s$ and setting $u=u_1-u_2$, we deduce that $u$ solves \eqref{linear} with interior source $f=\kappa(|u_1|^{\lambda-1}u_1-|u_2|^{\lambda-1}u_2)$ and zero initial and boundary data (i.e., $u_0\equiv 0$, $g_0\equiv g_1\equiv 0$). Upon multiplication of the main equation with $\bar{u}$, integrating in $x$, taking the imaginary parts, and using the embedding $H^{s}_x(\mathbb{{R}_+})\hookrightarrow L^{\infty}(\mathbb{{R}_+})$ for $s>\frac12$, we get
\begin{equation}
\begin{split}
    \frac{1}{2}\frac{d}{dt}\|u(t)\|_{L^2(\mathbb{R}_+)}^2 &=\int_0^\infty \bar{u}fdx \le c\int_0^\infty (|u_1|^{\lambda-1}+|u_2|^{\lambda-1})|u_1-u_2|^2dx\\
    &\le c\left(\|u_1(t)\|_{L^\infty_x(\mathbb{R}_+)}^{\lambda-1}+\|u_2(t)\|_{L^\infty_x(\mathbb{R}_+)}^{\lambda-1}\right)\|u(t)\|_{L^2_x(\mathbb{R}_+)}^2\\
    &\le c\left(\|u_1\|_{X_T^s}^{\lambda-1}+\|u_2\|_{X_T^s}^{\lambda-1}\right)\|u(t)\|_{L^2_x(\mathbb{R}_+)}^2.
    \end{split}
\end{equation}
It follows $$\|u(t)\|_{L^2_x(\mathbb{R}_+)}\le \|u_0\|_{L^2_x(\mathbb{R}_+)} e^{c\left(\|u_1\|_{X_T^s}^{\lambda-1}+\|u_2\|_{X_T^s}^{\lambda-1}\right)t},\quad t\in (0,T),$$ from which we obtain $u\equiv 0$ (equivalently, $u_1\equiv u_2$) in view of $u_0\equiv 0$.

To prove continuous dependence on data, let us first fix an initial-boundary data triple: $d:=(u_{0},g_{0},g_{1})$ and let $T$ be the life-span (not necessarily maximal) of the associated local solution satisfying $c_3(T)R_T^{\lambda-1}<\frac{1}{2}$ in \eqref{c3RT} and $c(T,\lambda)R_T^{\lambda-1}<1$ in \eqref{clambdaTRT}. Note that $R_T$ is continuous when considered to be a map of data. It follows that there is $\rho>0$ (depending on $T$ and $d$) such that if $\tilde{d}=(\widetilde{u_0},\tilde{g_0},\tilde{g_1})$ is any other data with $$\|u_0-\widetilde{u_0}\|_{H^s_x(\mathbb{R}_+)}+\|g_0-\widetilde{g_0}\|_{H^{\frac{s+1}{3}}_t(\mathbb{R}_+)}+\|g_1-\widetilde{g_1}\|_{H^{\frac{s}{3}}_t(\mathbb{R}_+)}<\rho,$$ then the solution associated with $\tilde{d}$ also exists on the  same interval $[0,T]$.  Let $u_1,u_2\in X_T^s$ be the solutions associated with data $d,\tilde{d}$, respectively. Then, $u_1-u_2$ solves \eqref{linear} with initial-boundary data $(u_0-\widetilde{u_0},g_0-\tilde{g_0},g_1-\tilde{g_1})$ and interior source $f=\kappa(|u_1|^{\lambda-1}u_1-|{u}_2|^{\lambda-1}{u}_2)$. Using arguments similar to those used for \eqref{Phiinvariance} and \eqref{clambdaTRT}, we obtain
\begin{equation*}
    \|u_1-u_2\|_{X_T^s}\le c_1(T)\|u_0-\widetilde{u_0}\|_{H^s_x(\mathbb{R}_+)} +c_2(T)\left(\|g_0-\widetilde{g_0}\|_{H_t^{\frac{s+1}{3}}(0,T)}+\|g_1-\widetilde{g_1}\|_{H_t^{\frac{s}{3}}(0,T)}\right) +c(T,\lambda)R_T^{\lambda-1}\|u_1-u_2\|_{X_T^s}.
\end{equation*}
Since $c(T,\lambda)R_T^{\lambda-1}<1$, the last term on the right-hand side of the above estimate can be absorbed, with the resulting estimate implying continuous dependence on the data.

\subsection{Low regularity setting - Proof of Theorem \ref{LowRegThem}}
Here, we consider the case $s\in [0,\frac12)$ and $\lambda\ge 2$. For $T>0$, we set $Y_T^s\equiv X_T^s\cap L^\mu_t(0,T;H^{s,r}_x(\mathbb{R}_+))$, where $\mu=\frac{6\lambda}{(1-2s)(\lambda-1)}$ and $r=\frac{2\lambda}{1+2(\lambda-1)s}$ form an admissible pair.  We perform a different decomposition of the forced linear problem that results in the following solution map instead of the one leading to \eqref{PhiDef}: 
%
\begin{equation}\label{lindec}
S_b[u_0, g_0, g_1; f](x, t) = 
S_b[u_0, g_0, g_1; 0](x, t)
+
\int_0^t S_b[f(\cdot, t'), 0, 0; 0](x, t-t') dt',
\end{equation}
where $S_b$ denotes the solution operator to the full forced linear initial-boundary value problem.
The homogeneous component on the right side can be further decomposed as in the case of high regularity:
\begin{equation}
S_b[u_0, g_0, g_1; 0] 
=
y\big|_{Q_T}
+
S_b[0, \psi_0, \psi_1; 0],
\end{equation}
where $y$ is as in \eqref{PhiDef} and 
$\psi_j = g_j -\partial_x^j y|_{x=0}$, $j=0, 1$. Thus, estimating  via the Sobolev estimates of Theorems \ref{reduced} and \ref{cauchylemma} as well as the Strichartz estimates of Theorems \ref{homStr} and \ref{reduced-s}, we have
\begin{equation}\label{homs}
\left\| S_b[u_0, g_0, g_1; 0] \right\|_{Y_T^s}
\lesssim
\left\|u_0\right\|_{H_x^s(\mathbb R_+)} 
+
\left\|g_0\right\|_{H_t^{\frac{s+1}{3}}(0, T)} 
+
\left\|g_1\right\|_{H_t^{\frac{s}{3}}(0, T)}.
\end{equation}
As this estimate \textit{does not require the prescription of compatibility conditions} due to the fact that $s<\frac 12$ (and so we are below the continuity threshold), we may also employ it for the second term on the right side of \eqref{lindec}. In particular, by Minkowski's inequality,
\begin{align}
\left\|\int_0^t S_b[f(\cdot, t'), 0, 0; 0](x, t-t') dt'\right\|_{X_T^s}
&\leq
\sup_{t\in [0, T]} \int_0^t \left\|S_b[f(\cdot, t'), 0, 0; 0](x, t-t')\right\|_{H_x^s(\mathbb R_+)} dt'
\nonumber\\
&\lesssim
\sup_{t\in [0, T]} \int_0^t \left\|f(\cdot, t')\right\|_{H_x^s(\mathbb R_+)} dt'
\leq
\left\|f\right\|_{L_t^1(0, T; H_x^s(\mathbb R_+))}.
\end{align}
Moreover, since for any $1 \le p \le \infty$ Minkowski's integral inequality implies 
\begin{equation}
\norm{\int_0^t \varphi(t, t') dt'}_{L^p_t (0, T)}
\le \int_0^T \norm{\varphi(t')}_{L^p_t (t', T)} dt',
\end{equation}
we have
\begin{align}
\left\|\int_0^t S_b[f(\cdot, t'), 0, 0; 0](x, t-t') dt'\right\|_{L_t^\mu(0, T; H_x^s(\mathbb R_+))}
&\leq
\int_0^T \left\|S_b[f(\cdot, t'), 0, 0; 0](x, t-t')\right\|_{L_t^\mu(t', T; H_x^s(\mathbb R_+))} dt'
\nonumber\\
&\lesssim
\int_0^T \left\|f(\cdot, t')\right\|_{H_x^s(\mathbb R_+)} dt'
=
\left\|f\right\|_{L_t^1(0, T; H_x^s(\mathbb R_+))}
\end{align}
after observing that, although the norm inside the second integral is over the interval $(t', T)$ instead of the usual interval $(0, T)$, the Strichartz estimate of Theorem \ref{homStr} remains unaffected (indeed, recall that the Strichartz estimate for the homogeneous Cauchy problem is in fact established over $t\in \mathbb R$). 
Therefore, we overall deduce
\begin{equation}\label{yst}
\left\| S_b[u_0, g_0, g_1; f] \right\|_{Y_T^s}
\lesssim
\left\|u_0\right\|_{H^s(\mathbb R_+)} 
+
\left\|g_0\right\|_{H^{\frac{s+1}{3}}(0, T)} 
+
\left\|g_1\right\|_{H^{\frac{s}{3}}(0, T)}
+
\left\|f\right\|_{L_t^1(0, T; H_x^s(\mathbb R_+))}.
\end{equation}
In turn, the iteration map
\begin{equation}
\Phi(u) := S_b[u_0, g_0, g_1; \kappa |u|^{\lambda-1} u]
\end{equation}
admits the estimate
\begin{equation}\label{yst2}
\left\| \Phi(u) \right\|_{Y_T^s}
\lesssim
\left\|u_0\right\|_{H^s(\mathbb R_+)} 
+
\left\|g_0\right\|_{H^{\frac{s+1}{3}}(0, T)} 
+
\left\|g_1\right\|_{H^{\frac{s}{3}}(0, T)}
+
|\kappa| \left\||u|^{\lambda-1} u\right\|_{L_t^1(0, T; H_x^s(\mathbb R_+))}.
\end{equation}

We recall the following basic lemma to handle the nonlinearity:

\begin{lemma}[\cite{MMO26}]\label{L1estimates}
    Suppose $0 \le s < \frac12$, $2 \le \lambda \le \frac{7-2s}{1-2s}$, and
$$
(\mu,r)=\left(\frac{6\lambda}{(1-2s)(\lambda-1)},\frac{2\lambda}{1+2(\lambda-1)s}\right).
$$
Then,
$$
\bigl\||\varphi|^{\lambda-1}\varphi\bigr\|_{L_t^1((0,T);H_x^s(\mathbb{R}))}
\le c(s,\lambda)T^{\frac{\mu-\lambda}{\mu}}
\|\varphi\|_{L_t^\mu((0,T);H_x^{s,r}(\mathbb{R}))}^{\lambda},
$$
and
$$
\bigl\||\varphi|^{\lambda-1}\varphi-|\psi|^{\lambda-1}\psi\bigr\|_{L_t^1((0,T);H_x^s(\mathbb{R}))}
\le c(s,\lambda)T^{\frac{\mu-\lambda}{\mu}}
\left(
\|\varphi\|_{L_t^\mu((0,T);H_x^{s,r}(\mathbb{R}))}^{\lambda-1}
+
\|\psi\|_{L_t^\mu((0,T);H_x^{s,r}(\mathbb{R}))}^{\lambda-1}
\right)
\|\varphi-\psi\|_{L_t^\mu((0,T);H_x^{s,r}(\mathbb{R}))}.
$$
\end{lemma}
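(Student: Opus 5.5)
The plan is to derive both inequalities from three ingredients: the fractional Leibniz/chain rule in Bessel potential spaces, Hölder's inequality in $x$, and Hölder's inequality in $t$. The admissibility of $(\mu,r)$ and the restriction on $\lambda$ enter only through exponent bookkeeping.

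\textbf{Step 1: spatial estimate at a fixed time.} For fixed $t$, I would write $\|F(\varphi)\|_{H^s_x(\mathbb{R})}\simeq \|F(\varphi)\|_{L^2}+\|D^sF(\varphi)\|_{L^2}$, where $F(\varphi)=|\varphi|^{\lambda-1}\varphi$. The fractional chain rule (Christ--Weinstein type, valid for $0\le s<1$ and $F\in C^1$ with $|F'(\varphi)|\lesssim|\varphi|^{\lambda-1}$) gives
\[
\|D^sF(\varphi)\|_{L^2}\lesssim \||\varphi|^{\lambda-1}\|_{L^{p_1}}\|D^s\varphi\|_{L^{r}},\qquad \tfrac12=\tfrac1{p_1}+\tfrac1r .
\]
The $L^2$ part is handled the same way with $s=0$. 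For the first factor, $\||\varphi|^{\lambda-1}\|_{L^{p_1}}=\|\varphi\|_{L^{q}}^{\lambda-1}$ with $q=(\lambda-1)p_1$. With $r=\frac{2\lambda}{1+2(\lambda-1)s}$ one computes $\frac1q=\frac{1}{\lambda-1}\bigl(\frac12-\frac1r\bigr)=\frac1r-s$. This is exactly the Sobolev embedding exponent $H^{s,r}(\mathbb{R})\hookrightarrow L^{q}(\mathbb{R})$, which is valid since $sr<1$ because $s<\frac12$. Hence
\[
\|F(\varphi(t))\|_{H^s}\lesssim \|\varphi(t)\|_{H^{s,r}}^{\lambda}.
\]
The hypothesis $\lambda\ge2$ ensures enough regularity of $F$ for the chain rule and for the difference estimate.

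\textbf{Step 2: integrate in time.} Integrating over $(0,T)$ and applying Hölder with exponents $\frac{\mu}{\lambda}$ and $\frac{\mu}{\mu-\lambda}$ gives
\[
\int_0^T\|\varphi\|_{H^{s,r}}^{\lambda}\,dt\le T^{\frac{\mu-\lambda}{\mu}}\|\varphi\|_{L^\mu_tH^{s,r}_x}^{\lambda}.
\]
This requires $\mu\ge\lambda$. From $\mu=\frac{6\lambda}{(1-2s)(\lambda-1)}$, the condition $\mu\ge\lambda$ is equivalent to $(1-2s)(\lambda-1)\le 6$, that is, $\lambda\le\frac{7-2s}{1-2s}$. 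In the endpoint case the power of $T$ is $T^0$, which explains the smallness assumption in Theorem~\ref{LowRegThem}. I would also check that $(\mu,r)$ is admissible, namely that $\frac3\mu+\frac1r=\frac12$ and $\mu,r\ge 2$; this is direct algebra.

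\textbf{Step 3: the difference estimate.} Write
\[
F(\varphi)-F(\psi)=\int_0^1 \bigl[F_z(\psi+\theta w)\,w+F_{\bar z}(\psi+\theta w)\,\bar w\bigr]\,d\theta,\qquad w=\varphi-\psi .
\]
The $L^2$ part follows from Hölder exactly as before. For the $D^s$ part, I would use the fractional Leibniz rule:
\[
\|D^s(G w)\|_{L^2}\lesssim \|G\|_{L^{p_1}}\|D^s w\|_{L^r}+\|D^sG\|_{L^{p_2}}\|w\|_{L^{q}} .
\]
Here $G=F_z(\cdot)$ is $\lambda-1$ homogeneous. Its fractional derivative is controlled by the chain rule when $\lambda\ge2$, or by a Hölder-continuity version of the chain rule (Visan/Kenig--Ponce--Vega) when $1<\lambda-1<2$ is not smooth enough. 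The same exponents as in Step 1 close the estimate, and then Step 2's Hölder argument in time applies.

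I expect the main obstacle to be this $D^s$ estimate of $F'(\cdot)$ for non-integer $\lambda$ near $2$, since $F'$ is only $(\lambda-2)$-Hölder there. If that causes trouble, I would fall back on the Sobolev--Slobodeckij difference characterization of $H^s$ with $s<\frac12$. In that characterization one estimates $|G(x+h)-G(x)|\lesssim(|\varphi|+|\psi|)^{\lambda-2}\bigl(|\varphi(x+h)-\varphi(x)|+|\psi(x+h)-\psi(x)|\bigr)$ pointwise, which avoids needing any smoothness of $F$ beyond $C^1$ with Lipschitz-type bounds.
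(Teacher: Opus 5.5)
Your argument is correct and is the standard proof of this estimate, which the paper does not prove itself but imports from \cite{MMO26}: it combines the fractional chain and Leibniz rules, the embedding $H^{s,r}(\mathbb{R})\hookrightarrow L^q(\mathbb{R})$ with $\frac1q=\frac1r-s=\frac{1-2s}{2\lambda}$, and H\"older in $t$, where $\mu\ge\lambda$ is exactly $\lambda\le\frac{7-2s}{1-2s}$. The obstacle you anticipate in Step 3 does not arise, because for $\lambda\ge2$ it is $F''$, not $F'$, that is only $(\lambda-2)$-H\"older: $F_z,F_{\bar z}$ are locally Lipschitz with derivatives $O(|u|^{\lambda-2})$, so the ordinary chain rule (its Lipschitz version at $\lambda=2$) gives $\|D^sF_z(u)\|_{L^{p_2}}\lesssim\|u\|_{L^q}^{\lambda-2}\|D^su\|_{L^r}$ with $\frac1{p_2}=\frac12-\frac1q$, which closes the estimate, and hence neither a H\"older-type chain rule (relevant only for $\lambda<2$) nor your Slobodeckij fallback is needed (the latter, done with H\"older at fixed $h$, would in fact produce a $\dot B^{s}_{r,2}$ norm, which the $H^{s,r}$ norm does not control when $r>2$).
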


It should be noted that although the above lemma uses the whole line for the spatial domain, its estimates are also true for the half-line since the zero-extension operator is bounded in the regime $0\le s<\frac12$. Thus, \eqref{yst2} takes the form 
\begin{equation}\label{lowregest02}
    \|\Phi(u)\|_{Y_T^s}\le c_1(T)\|u_0\|_{H^s_x(\mathbb{R}_+)} +c_2(T)\left(\|g_0\|_{H_t^{\frac{s+1}{3}}(0,T)}+\|g_1\|_{H_t^{\frac{s}{3}}(0,T)}\right) +c_3(T)T^{1-\frac{\lambda}{\mu}}\|u\|_{Y_T^s}^\lambda,
\end{equation} where now $c_3$ also depends on $\lambda,s,\kappa$. Moreover, in the above estimate $c_i(T)$, $i=1,2,3$, remain bounded as $T\rightarrow 0^+$.  Assume without loss of generality that either $u_0$ or else at least one of the $g_i$'s is nonzero (otherwise the zero solution already satisfies the nonlinear ibvp). Set $R_T$ as in \eqref{defrT}.  Then, for any $u\in \overline{B_{R_T}(0)}\subset Y_T^s$, \begin{equation}\label{c3RT}
     \|\Phi(u)\|_{Y_T^s} \le \frac{1}{2}R_T +c_3(T)T^{1-\frac{\lambda}{\mu}}R_T^\lambda\le R_T
\end{equation} provided $T$ is small enough and $\lambda<\frac{7-2s}{1-2s}$. That is, $\Phi$ is invariant on $\overline{B_{R_T}(0)}$. In the critical case $\lambda=\frac{7-2s}{1-2s}$, we have $1-\frac{\lambda}{\mu}=0$, therefore taking $T$ will not help on its own.  In this case, we in addition assume initial datum $u_0$ to be small for invariance to hold, noting that $\|g_0\|_{H_t^{\frac{s+1}{3}}(0,T)}+\|g_1\|_{H_t^{\frac{s}{3}}(0,T)}$ can be already made small by taking $T$ small without imposing an extra smallness assumption on the global norms of $g_0$ and $g_1$.

Again in view of Lemma \ref{L1estimates} and the linear estimates of the manuscript, regarding the differences, we get
\begin{equation}\label{clambdaTRTlow}
     \|\Phi(u_1)-\Phi(u_2)\|_{Y_T^s} \le c(T,\lambda)T^{1-\frac{\lambda}{\mu}}R_T^{\lambda-1}\|u_1-u_2\|_{Y_T^s},
\end{equation} where $c(T,\lambda)$ remains bounded as $T\rightarrow 0^+$ and it may depend on also other fixed parameters of the problem. It follows that $\Phi$ is a contraction on $\overline{B_{R_T}(0)}$ for small enough $T$ (provided $\lambda<\frac{7-2s}{1-2s}$ or $u_0$ small when $\lambda=\frac{7-2s}{1-2s}$), therefore has a unique fixed point $u\in \overline{B_{R_T}(0)}$, establishing local existence of a solution $u\in Y_T^s$.  Uniqueness of the local solution in $Y_T^s$ can be proven by adapting the uniqueness arguments in \cite[Section 3.3]{amo2024} to two boundary condition and the continuous dependence on data can be done as in the high regularity setting.

\bibliographystyle{amsalpha}

\def\CMbib{C:/Users/Chris/Documents/School/Research/referencesto.bib}
\def\DMbib{referencesto.bib}

\IfFileExists{\CMbib}{\bibliography{\CMbib}}{
	\IfFileExists{\DMbib}{\bibliography{\DMbib}}{
}}
\end{document}